\documentclass{siamltex}

\usepackage{amsmath}
\usepackage{amssymb,latexsym,amsfonts}
\usepackage[mathscr]{eucal}
\usepackage{upgreek}
\usepackage{enumerate}

\usepackage{epsfig}
\usepackage{float}
\usepackage{graphicx}
\usepackage{color}
\usepackage{rotate}

\usepackage{cite}
\makeatletter
\usepackage[curve,arrow,matrix]{xy}
\makeatother


\usepackage{mymacrosSIAM}

\usepackage{algorithmic}

\usepackage{algorithm}

\newcommand{\image}{\mathrm{Image}}


\title{Singular Continuation: Generating Piece--wise Linear Approximations to Pareto Sets via Global Analysis}
\author{Alberto Lovison
\thanks{Dipartimento di Matematica Pura ed Applicata, Via Trieste, 63 35121--Padova, Italy, \texttt{lovison@math.unipd.it}}}
\begin{document}

\maketitle

\begin{abstract}
We propose a strategy for approximating Pareto optimal sets based on the global analysis framework proposed by Smale (Dynamical systems, New York, 1973, pp. 531--544). The method highlights and exploits the underlying manifold structure of the Pareto sets, 
approximating Pareto optima by means of simplicial complexes.  
The method distinguishes the hierarchy between singular set, Pareto critical set and stable Pareto critical set, and can handle the problem of superposition of local Pareto fronts, occurring in the general nonconvex case.
Furthermore, a quadratic convergence result in a suitable set--wise sense is proven and tested in a number of numerical examples. 
\end{abstract}
\begin{keywords} 
Multiobjective optimization, Pareto critical set, Delaunay tessellations in general dimension,  stability of mappings\end{keywords}

\begin{AMS}
90C29, 
58K25 
\end{AMS}

\pagestyle{myheadings}
\thispagestyle{plain}
\markboth{ALBERTO LOVISON}{GLOBAL ANALYSIS AND MULTIOBJECTIVE OPTIMIZATION}


\section{Introduction}
\subsection{Multiobjective optimization and Pareto optimality}
Multiobjective optimization 
is concerned with the problem of optimizing several functions (or objectives) simultaneously. A precise mathematical statement in an economical framework was first given by 
V. Pareto \cite{Pareto:1896oa,Pareto:1906ya} in the 1880's.
In recent years 
a strong interest has grown,
as a variety of problems in structural mechanics, automotive, aerospace, production planning, environmental policy and many others, involve more than one objective function and different numerical strategies have been developed subsequently  \cite{Miettinen:1999fk}.

In the single objective case, an optimum is defined as a point $x\in W\subseteq \R^n$ where a given function $u:W\to \R$ assumes its maximum, if the maximum exists. 
In multiobjective optimization we consider two or more functions, $u_1,\dots,u_m:W\to\R$, and in all the non trivial cases the optima for one function are distinct from the optima of the remaining ones. A key point is that not only one has to consider the optima of the individual functions, which usually are finite, but also 
one usually finds an infinite number of so--called \emph{non--dominated} points. They are defined precisely as follows.
%
\begin{defin}[Pareto optimality]
Let $W$ be an open subset of $\R^n$, or an $n$--dimensional manifold, and let $u_1,\dots, u_m: W\to \R$ be smooth functions\footnote{We may equivalently refer to a unique smooth \emph{vector function} $u: W\to \R^m$, or \emph{mapping}.}. A point $\bar x\in W$ is called a \emph{non--dominated point}, or a \emph{Pareto optimum}, if there is no $x\in W$ such that $u_i(x)\geq u_i(\bar x)$ for all $i=1,\dots, m$ and $u_j(x) > u_j(\bar x)$ for some $j$. 
If there exists a neighborhood $V\subseteq W$ of $\bar x$ where $\bar x$ is Pareto optimum, then $\bar x$ is called a \emph{local} Pareto optimum.  
\end{defin}
\subsection{The necessity for global representations of the Pareto sets}
As pointed out for instance in \cite{Benson:1997p2942}, the set of Pareto optima is in many cases a large and complicated nonconvex set and most of the existing algorithms, being inspired by local search ideas from traditional linear and nonlinear programming, fail at giving a truly global representation of the set of Pareto optima. 
Also \cite{Das:1998zh} stresses that \emph{``a whole collection of Pareto optimal points, representative of the entire spectrum of efficient solutions''} would be helpful in facilitating design in engineering applications.

Recent multiobjective optimization literature tackled this issue focussing on defining algorithms producing 
even distributions of Pareto points \cite{Das:1998zh,Messac:2004le,Messac:2008fi,
Utyuzhnikov:2009yj} while an alternative philosophy \cite{Rakowska:1991ri,Rakowska:1993mq,Rao:1989lg,Schutze:2005fk} dealt with producing local meshes approximating Pareto sets, relying on continuation (homotopy) strategies. In the recent paper \cite{Pereyra:2009xz},  both topics are addressed. Alternative techniques aiming to approximate the entire optimal set are described in  the recent papers \cite{Gourion:2008rr,Luc:2005sp}, in the survey \cite{Ruzika:2005dz} and in the references therein. 

We want here to highlight a key feature of the Pareto set which makes it, in general nonconvex cases, a complicated set. 
Its complexity is even amplified when the Pareto set is viewed in the output space.
\begin{figure}[htbp]
\begin{center}
\begin{tabular}{cc}
\includegraphics[width=65mm]{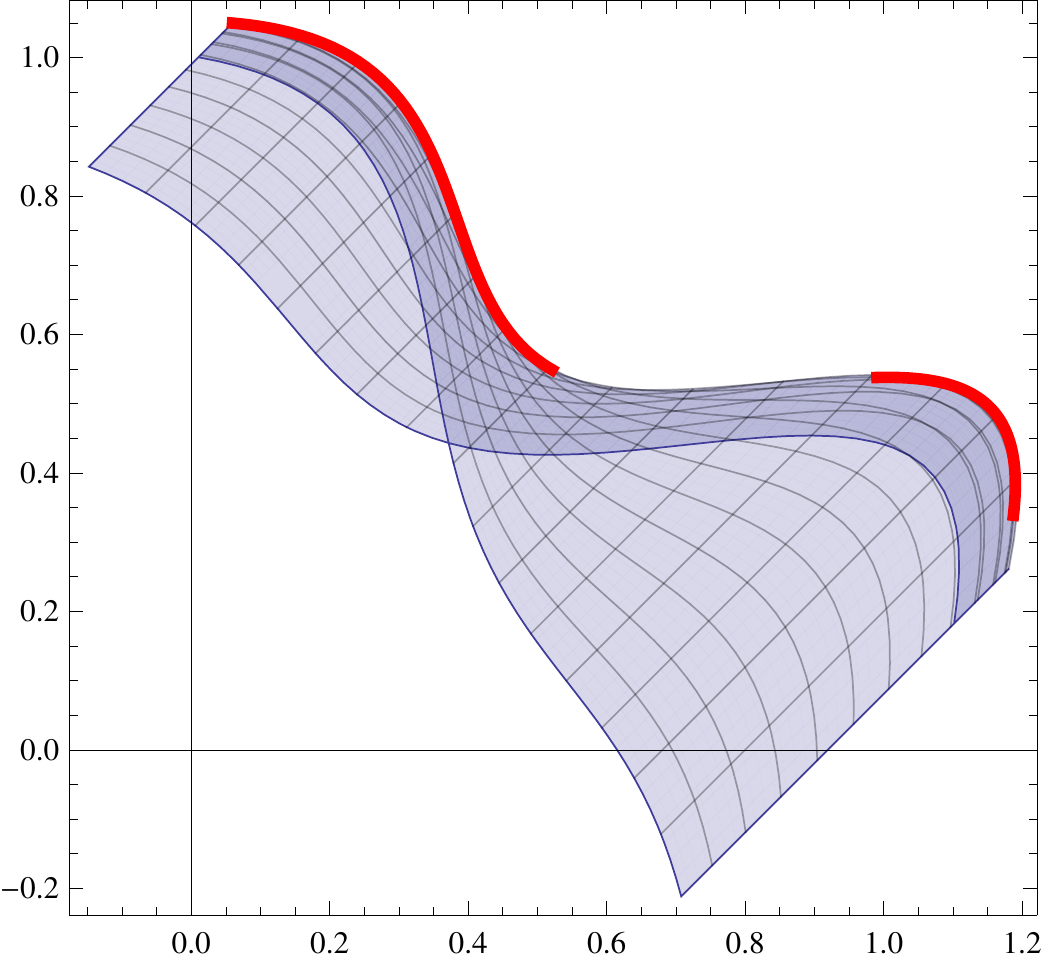} &
\includegraphics[width=55mm]{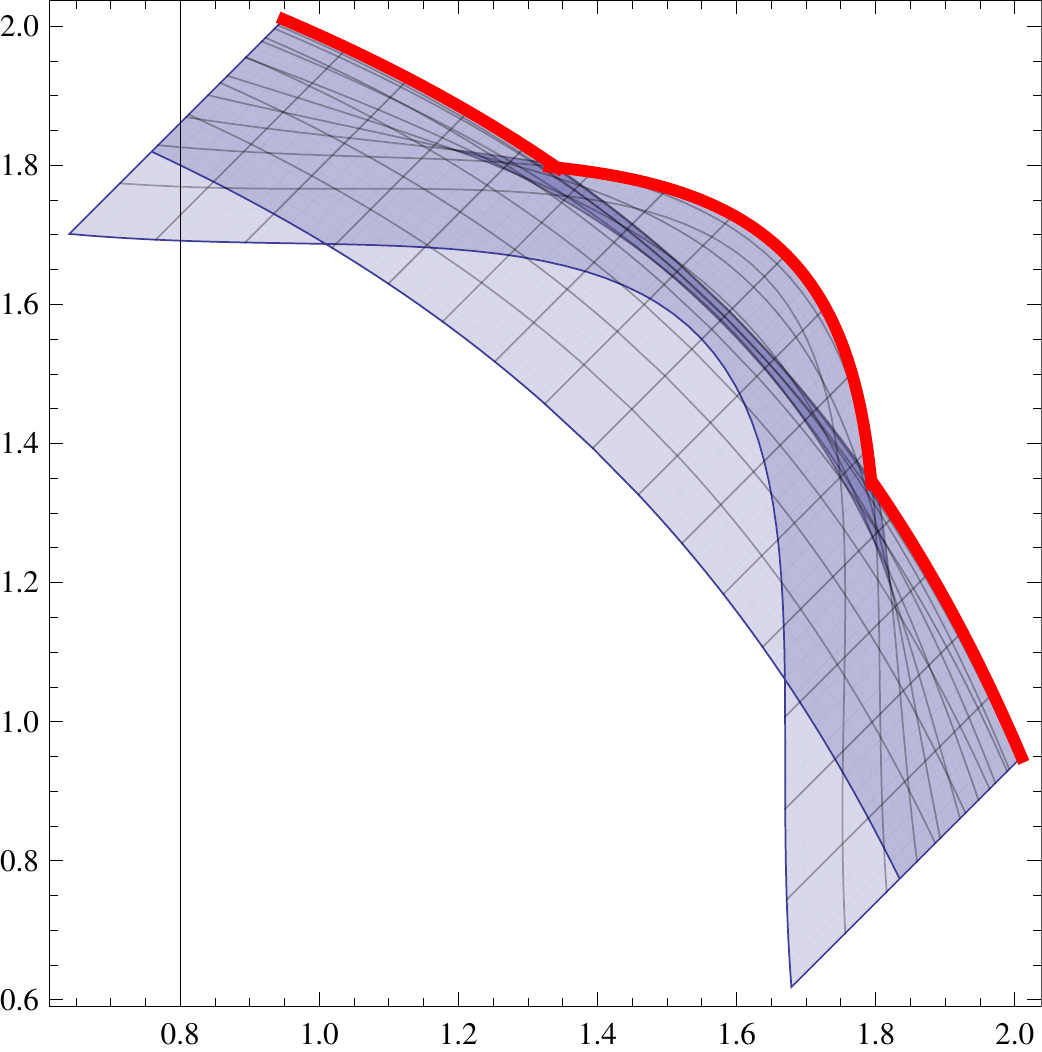} \\
(a) & (b)
\end{tabular}
\caption{Possible problems arising when the objectives are nonconvex functions. (a) The Pareto set is composed by separate branches. (b) A connected global Pareto set is composed by separate local branches crossing each other.}
\label{fig:globalissues}
\end{center}
\end{figure}

Indeed, the set of all global Pareto optima can be disconnected, i.e., composed by separate portions of seemingly smooth surfaces (see figure \ref{fig:globalissues}(a)). Furthermore, even when the image of the set of global Pareto optima is a connected set,  in fact 
it could be composed by 
cutting and sewing together different locally optimal branches (see figure \ref{fig:globalissues}(b)), coming from separate zones of the domain. 
We will illustrate in the sequel that this kind of behavior is not an artifact obtained with unrealistic functions but in a sense represents a typical situation that is not destroyed by slight deformations of the functions. Those situations are persistent, or more technically, \emph{structurally stable}.

We notice that the algorithms mentioned above are expected to work properly only in a local sense, although they are intended to capture some of the global features of the optimal set. Moreover, apart from the homotopy techniques, they are \emph{point--wise} strategies, in the sense that as Pareto optimal set they produce a scatter of points; the evenness of the distribution of points is then estimated on the image space. In some applications those points are joined together in a compound structure, e.g., a Delaunay triangulation, but only a posteriori and in the output space. It should be noticed that because of the effects of mappings described above, defining such a structure from the output space, i.e., joining nearby \emph{values} of optimal points, is subject to failure: the corresponding preimage points, indeed, are not necessarily nearby, or even connected in the way suggested by the positions of their images. Conversely, the images of nearby optimal points are nearby, because of continuity. 

We propose instead that a faithful global representation of the Pareto set in general nonconvex settings is obtained according to the following three steps. Firstly, by shifting the focus from the output space to the input space, secondly by approximating the full set of the local Pareto optima, and thirdly by adopting a \emph{set--wise} standpoint, namely using compound geometrical objects  as simplicial complexes instead of scatters of points.
 The first step unfolds the singularities (branches crossing, cusp points and so on) occurring as an effect of the mapping. 
Indeed, as illustrated in the sequel, the preimage of the Pareto set is non--singular, as it exhibits in general a regular manifold structure.
The second step, because of possible superpositions of local branches, guarantees that every portion of the global Pareto optimal set is represented.   
Thirdly, simplicial complexes, i.e., \emph{meshes}, faithfully reflect the manifold structures and explicitly offer the desired parametrization for each portion of the Pareto set, allowing to perform ``trade--off studies'' among the conflicting objectives. 
Indeed, trade--off studies may be the application of greatest practical importance of multiobjective optimization.  
Nevertheless, from the above discussion it is clear that trying to track the surface of the Pareto set by picking points from the output space, as point--wise strategies are aimed to do, is supposed to work correctly only for limited intervals.  

There are at least two reasons why our program has not yet been pursued in its entirety. 
First of all, in a number of situations, the numerical techniques available in literature are able to build sufficiently faithful representations of the Pareto set. E.g., when the functions at hand are convex, or relatively simple, or when the singularities are situated far away from interesting zones, a global investigation of the problem is not required. Moreover, typically, trade studies are performed in the neighborhood of a previously determined solution, therefore they can be limited to a non problematic branch of the Pareto set giving back as well the important information. 
Secondly, it is clear that a global exploration of the domain is a demanding task which could be far out of the scopes of a typical design problem. 

Nevertheless, faithful global representations of the Pareto set are a worthy goal to pursue, because they complement existing local exploitation strategies in two senses: they resolve the above mentioned problematic superpositions and 
they facilitate the location of important zones, which could merit further investigation. It is clear that this kind of program has to be implemented in an efficient way in order to be realistically useful in applications. On the other hand, even a roughly sketched global picture of the whole situation can give crucial information on the problem at hand, suggesting correctly the location of paramount zones. 


\subsection{Global analysis and multiobjective nonlinear programming}
With this in mind, we have devised a novel numerical strategy for approximating Pareto sets, theoretically based on the global analysis\footnote{See \cite{Smale:1969pt}. For brevity, we speak a bit loosely of global analysis also when referring to concepts of singularity theory or differential topology.} framework established by S. Smale and others in the early 1970's  
\cite{Melo:1976sj,Melo:1976xd,Debreu:1976ta,Debreu:1993if,
Smale:1973km,
Smale:1975oy,
Wan:1975bf,Wan:1975lt,
Wan:1978jy
} and in more recent work \cite{Miglierina:2004fr,Miglierina:2008gf}.
Motivated by his discussions with G. Debreu\footnote{ Debreu won the Nobel prize for Economics in 1983 \emph{``for having incorporated new analytical methods into economic theory and for his rigorous reformulation of the theory of general equilibrium''}. For an account of the cooperation between Smale and Debreu, see \cite{Debreu:1976ta,Debreu:1993if}.}, Smale investigated the problem of optimizing several functions within the dynamical systems arena.  In the series of works that followed there emerged interesting topological and geometrical features of the sets of the Pareto optima. The notion of \emph{Pareto critical set}  $\theta$, generalizing the concept of critical point for scalar functions,  was introduced and furthermore, local Pareto optima were characterized by means of first and second derivatives. Quoting Smale \cite{Smale:1973km}:
\begin{quote}\itshape
	``We study the local and global nature of $\theta$, as one uses freshman calculus to study the maximum of a single function.''
\end{quote}

One of the basic facts highlighted in Smale's global analysis framework, is that under the assumptions of second order differentiability and some generic transversality condition, Pareto optimal sets are portions of $(m-1)$--dimensional manifolds. It is fundamental that a slight deformation of the functions do not alter substantially the Pareto set. 
Global analysis is the proper setting where to study such kind of resilience properties. 
A mapping $u: W\to \R^m$ is said \emph{structurally stable}\footnote{
To be precise, we should speak of \emph{stability of mappings}, while \emph{structural} stability is more often used when speaking about differential equations. On the other hand we must speak about stability of Pareto optima, which is instead a concept deriving from the study of stability of equilibra, and refers to critical points which are maxima.  
Therefore we will keep speaking of \emph{structural} stability when dealing with typical singularities of mappings.} if there exists a neighborhood $N(u)$ in the $C^r$ topology such that every function $\tilde u$ in $N(u)$ is equivalent to $u$, i.e., there exist diffeomorphisms $h,k$, close to the identities of the respective spaces, such that the diagram:
$$
\xymatrix{
W \ar[r]^u  \ar[d]_h  & \R^m \ar[d]^k \\
W \ar[r]^{\tilde u}& \R^m
}
$$
commutes. 
Clearly if two mappings are equivalent, their Pareto sets are diffeomorphic. 
One of the main results of global analysis is that there exists an open and dense set in $C^r(W,\R^m)$ of structurally stable mappings.\footnote{It is necessary that $m<7$ and $n\neq 8$, or $m<6$ and $n=8$ \cite{Mather:1971sh}.} In other words, the Pareto set of \emph{almost} every mapping $u$ is as close as desired to the Pareto set of any other mapping in a sufficiently small neighborhood of $u$. This is clearly of fundamental importance for the applications: when functions are known only with a certain approximation, as usual in engineering design problems, 
 the set of optimal points is guaranteed to be approximated correctly by any convergent sequence of functions  \cite{Arnold:1968fx,Smale:1969pt,Thom:1972rw}.\footnote{The original idea of structural stability is a joint work from an engineer, A. Andronov, and a mathematician, L. Pontryagin, see \cite{Smale:1967p2934,Smale:1969pt}.}
Moreover, a generalization of Morse theory for several functions can be defined \cite{Smale:1973km,Wan:1975bf}.

The strategy presently  proposed highlights and exploits the manifold structure underlying the Pareto sets and precisely reproduces the hierarchy, described in Smale's work, among singular set, Pareto critical set and stable Pareto critical set.  
These sets are approximated by means of simplicial complexes, and exploiting Newton--type estimates it is possible to prove quadratic precision in set--wise sense, adopting the Hausdorff measure. Because of this result the present method can be considered as a set--wise variant of multiobjective Newton methods, as \cite{Fliege:2009kc}.

The algorithms of this paper can also be considered as a globalization and generalization to more than two objectives of the homotopy techniques, while the use of tessellations can be thought of as the specialization of the techniques of simplicial pivoting \cite{Allgower:2000p2573,Allgower:1991p2654,Allgower:1985p2832} to the problem of optimizing several functions. 
A strong similarity can be found in the method proposed by \cite{Schutze:2005fk}, where the authors detect and progressively refine the hypercubes containing the Pareto sets, relying on the standard Karush--Kuhn--Tucker conditions, instead of Pareto criticality.  

\section{The global analysis framework} 
We recall now Smale's definitions and results. 
Let $W\subseteq \R^d$ be an open set or more generally a smooth $n$--dimensional manifold, $u:W\to\R^m$ a smooth vector function, with $m\leq n$. 
\footnote{The case $m<n$ is less frequent. We will consider some aspect of this case in the sequel.}
The \emph{singular set} $\Sigma\subseteq W$, is the collection of \emph{singular points}, i.e., the points where the rank of the Jacobian $Du(x)$ is non maximal. If $m=1$, the singular set coincides with the set of critical, or stationary, points, i.e, $Du(x)=0$. It can be proved that under generic conditions the singular set is a smooth manifold. 
\\
Let $Pos$ be the open positive cone in $\R^m$, $Pos:= \set{y\in \R^m\taleche y_j>0, \forall j=1,\dots,m}$, and let $C_x$ the corresponding open cone in the tangent space $T_xW$, 
$C_x:=Du^{-1}(Pos)$.
\begin{defin}[Pareto critical set $\theta$] The set 
	\begin{equation}
		\theta := \set{x\in W\taleche C_x=\emptyset},
\end{equation}
is called the \emph{Pareto critical set}.
\end{defin}

We characterize $\theta$ in terms of the Jacobian of $u$.
\begin{prop}[First order proposition]\label{prop:firstorder} Let $x\in W$. Then, $x\in\theta$ if and only if:
\begin{enumerate}[(a)]
	\item $\set{Du_j(x)}_{j=1,\dots,m}$ do not belong to a unique open half space of the cotangent space $T^\star_x W$.
	\item $\exists \ \lambda_j \geq 0,\ j=1,\dots,m$, not all zero such that $\sum_j \lambda_j Du_j(x) = 0.$
\end{enumerate}
\end{prop}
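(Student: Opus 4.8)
The plan is to handle the equivalence between $x\in\theta$ and condition (a) directly from the definitions, and then to derive the equivalence between (a) and (b) from a Gordan--type theorem of the alternative, which I would in turn obtain from the separating hyperplane theorem.

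First I would unwind condition (a). By definition an open half space of the cotangent space $T^\star_x W$ is a set of the form $\set{\omega\in T^\star_x W\taleche \omega(v)>0}$ for some fixed nonzero $v\in T_x W$. Saying that the differentials $Du_1(x),\dots,Du_m(x)$ all lie in one such half space therefore means exactly that there is a $v\neq 0$ with $Du_j(x)(v)>0$ for every $j$, i.e. $Du(x)(v)\in Pos$, i.e. $v\in C_x$. Hence the negation of (a) is precisely $C_x\neq\emptyset$, so (a) is equivalent to $C_x=\emptyset$, which is the defining condition for $x\in\theta$. This step is essentially tautological once the half space is identified correctly.

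The real content is the equivalence of (a) and (b). Identifying $T^\star_x W$ with $\R^n$, I would introduce the polytope $P:=\operatorname{conv}\set{Du_1(x),\dots,Du_m(x)}$, which is compact and convex because it is generated by finitely many covectors. Condition (b), after normalizing the nonnegative multipliers to sum to one (legitimate since not all being zero forces $\sum_j\lambda_j>0$), is equivalent to $0\in P$. On the other side, the separating hyperplane theorem applied to the point $0$ and the compact convex set $P$ gives that $0\notin P$ holds if and only if there is a $v\in T_x W$ with $\omega(v)>0$ for all $\omega\in P$; since a linear functional attains its minimum over a polytope at one of its vertices, this last condition is equivalent to $Du_j(x)(v)>0$ for all $j$, i.e. to $C_x\neq\emptyset$, i.e. to the failure of (a). Chaining these equivalences yields that (a) holds if and only if $0\in P$, if and only if (b) holds.

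Finally I would verify the degenerate situation: if some $Du_k(x)=0$ then $0\in P$ trivially, (b) holds with $\lambda_k=1$ and the rest zero, and simultaneously no $v$ can make $Du_k(x)(v)>0$, so $C_x=\emptyset$ and (a) holds, consistently. I expect the main obstacle to be the separation step, where one must guarantee a \emph{strict} separation of $0$ from $P$; this is exactly the point at which compactness, hence closedness, of the finitely generated polytope $P$ is essential. With strict separation secured, the theorem of the alternative, and therefore the proposition, follows.
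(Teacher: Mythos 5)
Your proof is correct, and in fact the paper offers nothing to compare it against: Proposition \ref{prop:firstorder} is stated there without proof, as imported background from Smale's global analysis framework (cf.\ \cite{Smale:1973km}), accompanied only by interpretive remarks --- so your argument supplies the missing demonstration rather than diverging from an in-paper one. Your route is the standard one and all the delicate points are handled. The unwinding of (a) is indeed tautological \emph{once} one reads ``open half space of $T^\star_x W$'' as a half space bounded by a hyperplane through the origin, i.e.\ $\set{\omega\in T^\star_xW \taleche \omega(v)>0}$ for some $v\neq 0$; this is the only reading under which the statement has content (any finite set of covectors lies in some affine open half space), and with it the negation of (a) is literally $C_x\neq\emptyset$, since $v\in C_x$ forces $v\neq 0$ automatically. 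The equivalence of (a) and (b) is Gordan's theorem of the alternative, and your derivation is sound: the normalization $\sum_j\lambda_j=1$ correctly identifies (b) with $0\in P$ for $P=\operatorname{conv}\set{Du_1(x),\dots,Du_m(x)}$; compactness of the finitely generated $P$ is exactly what licenses \emph{strict} separation of $0$ from $P$, as you note; and the vertex argument (vertices of $P$ lie among the generators) transfers positivity of $\omega\mapsto\omega(v)$ between the generators and all of $P$. Your consistency check in the degenerate case $Du_k(x)=0$ --- where (b) holds trivially and no $v$ can make $Du_k(x)(v)>0$ --- closes the last loophole. Nothing is missing.
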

\begin{rem}
	The meaning of Proposition \ref{prop:firstorder} is to exclude at the first order, for $x$ to be critical, the existence of paths along which all the objectives $u_j$ can be incremented at the same time. If there were an  open half space containing all $D u_j$, as in condition (a), any direction in this half space would be a direction of improvement for every $u_j$. Equivalently, condition (b) states that the gradients $Du_j$ should be linear dependent and furthermore should ``oppose'' each other. In other words, moving in the direction of maximal increasing according to one of the $u_j$  causes one or more of the remaining $u_i$ to strictly decrease. 
\end{rem}
\begin{rem}
 In the bi--objective case, $m=2$, Proposition \ref{prop:firstorder} states that in Pareto critical points the two gradients are  collinear and in opposition to each other. Also critical points for one of the two objectives are Pareto critical.
\end{rem}

In analogy with freshman calculus, (Pareto) criticality is a \emph{necessary} condition for $x$ to be optimal. 
In order to discriminate the nature of the Pareto critical points we introduce a notion of stability and point out its relation with the second derivatives of $u$. This will give \emph{sufficient} conditions for $x$ to be Pareto optimal.
\begin{defin}
A curve $(a,b)\owns t\mapsto \phi(t)\in W$ is said to be \emph{admissible} if
\begin{equation}
	\detot{}{t} u_i(\phi(t)) > 0, \qquad t\in (a,b), \quad \forall i=1,\dots,m.
\end{equation}
\end{defin}
Clearly, if a point is Pareto critical, there could not exist admissible curves passing through it. 
In order to establish its optimality it is necessary to investigate the behavior of the admissible curves in a neighborhood of a critical point. Admissible curves are smooth paths along which every objective is incremented. Therefore, they move towards local Pareto optima; conversely, if a critical point captures all neighboring admissible curves, that point is a local Pareto optimum. 
\begin{defin}
	A Pareto critical point x is said to be  \emph{stable}, $x\in\theta_S$, if, given a neighborhood $V_x$ of $x$ in $W$, there exists a neighborhood $U_x$ of $x$ in $V_x$, such that every admissible curve $\phi:[a,b) \to  W$, with $\phi(a)\in U_x$ satisfies $\mathrm{Image}(\phi) \subset V_x$.
\end{defin}
%
%
Pareto stability can be fully decidable by carefully examining  the second derivatives of the objectives. In the single objective case, by virtue of the Morse's Lemma, it is possible to find a coordinate system where the objective $u$ can be written as a quadratic polynomial $u=\pm x_1^2 \pm\dots\pm x_d^2$, which number of minus signs defines the Morse index, and therefore decides the nature of the critical point (maximum, minimum or saddle) \cite{Milnor:1963fk}. With some effort, results can be extended to multiple objectives: second derivatives are not defined invariantly, but if we think about them as a symmetric bilinear form restricted to the kernel of the differential $Du(x)$ assuming values on the cokernel $\R^m/ \mathrm{Image}(Du(x))$, then this form is invariantly defined. It is called \emph{``2nd intrinsic derivative''} (see \cite{Mather:1971sh,Porteous:1971kr}).
The restriction to the kernel of the tangent map $Du(x)$ has also the following meaning. By investigating the attractive/repulsive behavior of admissible curves in a neighborhood of a critical point, we will not be interested in what happens along the directions parallel to the critical set, while the orthogonal space will be the arena where the stability of the critical points will be decided.
The case of greatest importance is where corank $Du(x)$ is 1 (i.e., rank $Du(x)$ is $m-1$). In this case the second intrinsic derivative assumes values in a 1--dimensional vector space. If we consider $x\in\theta$, we have $\mathrm{Image}(Du(x)) \cap Pos =\emptyset$, thus $\R^m/\mathrm{Image}(Du(x))$ has a canonical positive ray. We call the 2nd intrinsic derivative, in this case, the \emph{generalized Hessian} $H_x$. It makes sense to say that $H_x$ is negative definite or positive definite, as well as to define an \emph{index}, as the index of the symmetric form $H_x$. We set 
\begin{equation}
	\partial \theta = \set{x\in\theta \taleche \image (Du(x)) \cap \set{ \mathit{Cl}({Pos}) \setminus \set{0}} \neq \emptyset}
\end{equation}
where $\mathit{Cl}({Pos})$ is the closure of $Pos$.
\begin{prop}[2nd order Proposition] \label{prop:secorder} Let $u:W\to\R^m$ a smooth map with $x\in\theta$, $x\not\in\partial \theta$ and corank $Du(x)=1$. Then 
	\begin{enumerate}[(a)]
		\item if the generalized Hessian $H_x$ is negative definite, then $x\in\theta_S$.
		\item Let $\lambda_j\geq 0$, $j=1,\dots,m$ be as in the 1st order proposition; then (up to a positive scalar) 
		\begin{equation}
			H_x = \sum_{j=1}^m \lambda_j D^2 u_j(x), \qquad \text{on } \ker Du(x).
		\end{equation}
	\end{enumerate}
\end{prop}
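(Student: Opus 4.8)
My plan is to handle the two parts in the reverse of the stated order, first pinning down the generalized Hessian concretely (part (b)) and then using that description as a Lyapunov function to prove stability (part (a)). I first fix notation. Since $x\in\theta$ and corank $Du(x)=1$, the image $\image Du(x)$ is a hyperplane in $\R^m$ missing $Pos$, so $\R^m/\image Du(x)$ is one--dimensional and carries the canonical positive ray coming from the image of $Pos$. Let $\lambda=(\lambda_1,\dots,\lambda_m)$ be the multipliers of Proposition \ref{prop:firstorder}: $\lambda_j\ge 0$, not all zero, $\sum_j\lambda_j Du_j(x)=0$. The functional $\pi(y):=\sum_j\lambda_j y_j$ annihilates $\image Du(x)$, hence factors as $\pi=\tilde\pi\circ P$ through the projection $P:\R^m\to\R^m/\image Du(x)$, and since $\lambda_j\ge 0$ it is positive on the canonical ray. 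As corank is $1$, the annihilator of $\image Du(x)$ is one--dimensional, so $\lambda$ is determined up to a positive scalar: this is the source of the ``up to a positive scalar'' clause.

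For part (b), recall the generalized Hessian is $H_x=P\circ D^2u(x)\big|_{\ker Du(x)}$; its coordinate--invariance is the standard check that the correction terms under source/target reparametrisation either land in $\image Du(x)$ (killed by $P$) or are contracted with $Du(x)$ against kernel vectors (hence vanish). Then for $v,w\in\ker Du(x)$ the factorisation of $\pi$ gives $\pi\big(D^2u(x)(v,w)\big)=\tilde\pi\big(H_x(v,w)\big)$, i.e.
\begin{equation}
  \sum_{j=1}^m\lambda_j D^2u_j(x)(v,w)=\tilde\pi\big(H_x(v,w)\big).
\end{equation}
Identifying the positive ray with $\R_{>0}$ via $\tilde\pi$ turns $H_x$ into the scalar form $\sum_j\lambda_j D^2u_j(x)$ on $\ker Du(x)$, which is exactly (b).

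For part (a) I would set $\Lambda:=\pi\circ u=\sum_j\lambda_j u_j$, so $D\Lambda(x)=0$ and, by (b), the Hessian of $\Lambda$ restricted to $N:=\ker Du(x)$ equals $H_x$, negative definite. Straightening the submersion formed by the image--components of $u$, I choose source coordinates $(s,z)\in N\times K$ with $x=0$ in which the $\image Du(x)$--component of $u$ is exactly $z$; then $u=(\Lambda(s,z),z)$ and $\partial^2_{ss}\Lambda(0)=H_x<0$. Here the hypothesis $x\notin\partial\theta$ enters: since $\image Du(x)$ misses $\mathit{Cl}(Pos)\setminus\set{0}$, the functional $\pi$ is strictly positive there, yielding a constant $c_0>0$ with $\pi(y)\ge c_0|Q(y)|$ for $y\in\mathit{Cl}(Pos)$, where $Q$ projects onto $\image Du(x)$. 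Along any admissible curve $\phi$ one has $\detot{}{t}u(\phi)\in Pos$, whose cokernel component is $\detot{}{t}\Lambda$ and whose image component is $\dot z$, so the cone estimate becomes
\begin{equation}
  \detot{}{t}\Lambda \ge c_0\,|\dot z|>0,
\end{equation}
i.e. $\Lambda$ strictly increases while $z$ moves slowly relative to it. The endgame is to manufacture a strict Lyapunov maximum at $x$. Because $H_x<0$, writing $\bar\Lambda(z):=\max_s\Lambda(s,z)$ gives a smooth $\bar\Lambda$ with $\bar\Lambda(0)=0$, $D\bar\Lambda(0)=0$ (as $D\Lambda(0)=0$), whence $\Lambda(s,z)\le\bar\Lambda(z)\le C_1|z|^2$ and, by a routine Taylor estimate, jointly $\Lambda(s,z)\le -\mu|s|^2+C_1|z|^2$ near $0$ with $\mu>0$. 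Setting $F:=\Lambda-K|z|^2$ with $K$ large makes $F$ a strict local maximum ($F\le -c(|s|^2+|z|^2)$ nearby), while $\detot{}{t}\Lambda\ge c_0|\dot z|$ shows $\detot{}{t}F=\detot{}{t}\Lambda-2Kz\cdot\dot z\ge\tfrac12\detot{}{t}\Lambda>0$ for $|z|$ small. Thus an admissible curve from a small enough $U_x$ starts with $F(\phi(a))$ near the maximum $F(x)$ and can only raise $F$, so it stays trapped in $\set{F\ge F(\phi(a))}$, a small neighborhood inside $V_x$; a standard maximal--interval argument removes the apparent circularity of presupposing $|z|$ small. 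This gives $x\in\theta_S$.

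The hard part is (a): $\Lambda$ by itself is not a Lyapunov function, because $x$ need not be a local maximum of $\Lambda$ in the transverse directions $K$. The two ingredients that rescue the argument are the slow--transverse--motion estimate $\detot{}{t}\Lambda\ge c_0|\dot z|$ --- which is precisely where $x\notin\partial\theta$ is indispensable and would fail on $\partial\theta$, since then $c_0=0$ --- and the correction $-K|z|^2$ that upgrades $\Lambda$ to a genuine strict maximum. Calibrating these (the size of $K$ and keeping $|z|$ in the regime where $F$ is monotone, closed off by the continuation argument) is the delicate bookkeeping; by contrast part (b) and the invariance of the intrinsic second derivative are essentially formal.
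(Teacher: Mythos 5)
First, a point of comparison that matters here: the paper gives \emph{no} proof of Proposition \ref{prop:secorder} --- it states the result and defers entirely to \cite{Smale:1975oy} (``The proposition is proved in [Smale, 1975]''), so there is no in-paper argument to measure your proposal against, and it must be judged on its own merits and against Smale's original line of reasoning. On that basis your proof is correct, and it is essentially a sound reconstruction of the classical argument. Part (b) is, as you say, formal, and you use the corank--one hypothesis in both of the places where it is genuinely needed: it makes the annihilator of $\image (Du(x))$ one--dimensional, so $\lambda$ is unique up to a positive scalar (the sign being pinned by $\lambda_j\geq 0$, not all zero), and it forces $\ker\pi=\image(Du(x))$ \emph{exactly}, which is what later converts $x\notin\partial\theta$ into strict positivity of $\pi$ on $\mathit{Cl}(Pos)\setminus\set{0}$; homogeneity and compactness of the cone's unit cap then give the quantitative bound $\pi(y)\geq c_0\abs{Q(y)}$. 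Your part (a) assembles the right three ingredients and each step checks out: straightening $Q\circ u$ by the rank theorem makes the transverse velocity $\dot z$ literally a component of $\frac{d}{dt}u(\phi)$, so admissibility gives $\frac{d}{dt}\Lambda\geq c_0\abs{\dot z}>0$; the envelope function $\bar\Lambda(z)=\max_s\Lambda(s,z)$ is smooth near $0$ by the implicit function theorem precisely because $\partial^2_{ss}\Lambda(0)=H_x$ is invertible, and $D\bar\Lambda(0)=0$ legitimizes $\bar\Lambda(z)\leq C_1\abs{z}^2$ and the joint bound $\Lambda(s,z)\leq-\mu\abs{s}^2+C_1\abs{z}^2$ (via $\abs{s^*(z)}=O(\abs{z})$ and uniform negative definiteness in $s$); and with $K>C_1$ the corrected function $F=\Lambda-K\abs{z}^2$ has a strict quadratic maximum at $x$ while $\frac{d}{dt}F\geq\frac{1}{2}\frac{d}{dt}\Lambda>0$ holds on the slab $\abs{z}\leq c_0/(4K)$. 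The maximal--interval closure you invoke does discharge the apparent circularity: choosing a ball $B_r$ inside the slab and $\epsilon$ with $\sqrt{\epsilon/c}<r$, a curve starting in $U_x\subseteq\set{F>-\epsilon}$ is confined to $B_{\sqrt{\epsilon/c}}$ for as long as it remains in $B_r$, hence can never reach $\partial B_r$, which is exactly the trapping demanded by the paper's definition of $\theta_S$. The only items a final write--up should spell out are the two you flagged as routine --- the coordinate--invariance of the intrinsic second derivative in (b) and the joint Taylor estimate in (a); neither conceals a difficulty, and I see no gap.
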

The proposition is proved in 
\cite{Smale:1975oy}, while a discussion of the genericity of the hypotheses on the rank assumption is given in \cite{Smale:1973km}. 

Most importantly, Proposition \ref{prop:secorder} offers a useful and workable criterion for deciding the stability of critical points. We will translate numerically this proposition in Algorithm \ref{alg:secorder}.

\subsection{The structure of Pareto sets} \label{ssec:structpareto}
We start by recalling the notion of Thom's stratification (see \cite{Thom:1964lc,Thom:1956nq,Thom:1977wq,Thom:1972rw}). 
\begin{defin} Let $A\subset W$ be a closed subset. A stratification $\mathcal{S}$ of $A$ is a finite collection of connected submanifolds of $W$ satisfying the following properties:
\begin{enumerate}[(1)]
	\item $\cup_{S\in\mathcal{S}} S = A$.
	\item If $S\in\mathcal{S}$ then $\partial S= \mathit{Cl}(S) \setminus S$ is a union of elements of $\mathcal{S}$ of lower dimension. 
	\item If $S\in\mathcal{S}$ and $U$ is a submanifold of $W$ transversal to $S$ at $x\in S$ then $U$ is transversal to all elements of $\mathcal{S}$ in a neighborhood of $x$.
\end{enumerate}
\end{defin}
The following theorem has been proved in \cite{Melo:1976sj}. Consider the space 
$C^\infty(W,\R^m)$  endowed with the $C^\infty$ topology. $W$ is a compact manifold with dimension $n\geq m$.

\begin{teo}[$\theta$ is a stratified set of dimension $m-1$] There is an open and dense set 
$\mathcal{G}\subset C^\infty\tonde{W,\R^m}$ such that if $u\in\mathcal{G}$ then $\theta$ is a stratified set of dimension $m-1$.
\end{teo}
\begin{rem} If $m>n$, it is possible to prove that, for a generic mapping $u$, $\theta$ is a stratified set of dimension $n$.
\end{rem}

From the point of view of the numerical applications, we state that in the generic case the strata of the Pareto critical set $\theta$ can be discretized  by means of a collection of $(m-1)$--dimensional meshes. Obviously we would like to refine this procedure to $\theta_S$. Unfortunately, the following conjecture has been proved only for $m=2,3$ (see\cite{Melo:1976xd,Wan:1975bf}).

\begin{conjecture}
	There is an open and dense set $\mathcal{G}\subset C^\infty(W,\R^m)$ such that if $u\in \mathcal{G}$ then $\theta$ is a stratified set and $\theta_S$ is a union of strata.
\end{conjecture}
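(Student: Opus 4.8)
The plan is to start from the open and dense set on which the stratification theorem of \cite{Melo:1976sj} already guarantees that $\theta$ is a stratified set of dimension $m-1$, and to cut it down to a smaller generic set $\mathcal{G}$ on which, in addition, $\theta_S$ is a union of strata. The guiding principle is that, by Proposition~\ref{prop:secorder}, membership of a point in $\theta_S$ is decided pointwise by the second intrinsic derivative: on the open part of $\theta$ consisting of corank-one points with $x\notin\partial\theta$, stability is equivalent to the generalized Hessian $H_x=\sum_{j}\lambda_j D^2u_j(x)$, restricted to $\ker Du(x)$, being negative definite. Negative definiteness is an open condition on the space of symmetric forms, so $\theta_S$ is automatically open inside this top stratum. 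Consequently $\theta_S$ can fail to be a union of strata only along the three ``bad'' loci where the generalized Hessian degenerates, where the corank jumps to at least two, or where $x$ meets the frontier $\partial\theta$.

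The first step is therefore to arrange, generically, that each of these three loci is itself a stratified subset compatible with the stratification of $\theta$. I would do this with a jet-transversality argument: viewing $j^2u\colon W\to J^2(W,\R^m)$ and imposing transversality to the semialgebraic subsets of the $2$-jet space that describe (i) corank-one points whose generalized Hessian has a nontrivial kernel, (ii) points of corank $\geq 2$, and (iii) the frontier condition defining $\partial\theta$. By Thom's transversality theorem each condition holds on a dense set, and the corresponding degeneracy loci acquire the expected codimensions (the Hessian-degeneracy locus being codimension one inside $\theta$, the others of higher codimension). Intersecting finitely many such dense conditions produces a dense $\mathcal{G}$ on which the index of $H_x$ is locally constant on the complement of these lower-dimensional loci; refining the stratification of \cite{Melo:1976sj} by them then exhibits $\theta_S$ as exactly the union of those open strata on which $H_x$ is negative definite.

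The hard part, and the reason the statement is still only a conjecture for $m\geq 4$, is to show that $\mathcal{G}$ can be taken open and not merely dense. Density comes from transversality, but openness requires the diffeomorphism type of the whole decomposition --- $\theta$, its refinement by the degeneracy loci, and $\theta_S$ --- to be locally constant under $C^\infty$-small perturbations of $u$. This is a local structural-stability statement for $u$ near each stratum, and by Mather's theory \cite{Mather:1971sh} such local stability is available only inside the nice dimensions; outside them moduli of map-germs appear and the combinatorial type of the stratification can vary continuously, destroying openness. For $m=2,3$ the germs occurring along the frontier of $\theta$ and along the Hessian-degeneracy locus belong to the completely classified list of simple, finitely determined singularities (folds, cusps, and their immediate companions), for which the required local triviality can be verified directly; this is precisely how the arguments of \cite{Wan:1975bf,Melo:1976xd} close the proof in those cases, and it is the absence of an analogous finite classification for general $m$ that leaves the conjecture open.
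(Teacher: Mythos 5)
You should be aware that the paper does not prove this statement at all: it is explicitly labeled a conjecture, and the surrounding text says only that it ``has been proved only for $m=2,3$'', citing \cite{Melo:1976xd,Wan:1975bf}. So there is no paper proof to compare against, and your proposal --- which by its own admission establishes at best a density-type program and then concedes that openness is out of reach for general $m$ --- is a research outline, not a proof. To your credit, that concession is exactly right, and your diagnosis matches the paper's framing and the literature: the $m=2,3$ cases close because the relevant germs are finitely determined and classified (Mather's nice dimensions, \cite{Mather:1971sh}), and it is the failure of such a classification, hence of local triviality of the stratification under perturbation, that keeps the statement conjectural. Your split of the bad locus into Hessian degeneracy, corank $\geq 2$, and the frontier $\partial\theta$ is also the natural one.

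That said, a few steps in your sketch would need repair even as a program. First, Proposition \ref{prop:secorder} gives only \emph{sufficiency}: $H_x$ negative definite implies $x\in\theta_S$. Your claim that stability is \emph{equivalent} to negative definiteness on the corank-one, non-frontier part of $\theta$ is not stated in the paper and requires a separate argument (the converse direction, that stability forces $H_x\leq 0$, and that generically semidefinite-degenerate points can be confined to your codimension-one locus, is part of what Wan and de Melo actually have to prove). Second, the frontier condition defining $\partial\theta$, namely $\image(Du(x))\cap(\mathit{Cl}(Pos)\setminus\{0\})\neq\emptyset$, is a first-order condition, so it lives in the $1$-jet space, not the $2$-jet space; more importantly, the loci you want to hit transversally are semialgebraic subsets of jet space that are not submanifolds, so plain Thom transversality does not apply --- you need transversality to a Whitney stratification of these sets, and verifying that the resulting refinement of the stratification of \cite{Melo:1976sj} satisfies the frontier and Whitney conditions (your property (3) in the paper's definition) is itself nontrivial and is glossed over in your second step. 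None of this changes the verdict --- the statement is open for $m\geq 4$, and no blind attempt could have closed it --- but your writeup should be presented as a reduction of the conjecture to a local stability question, not as a proof whose ``hard part'' remains.
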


\begin{rem} The stable Pareto critical set $\theta_S$ is formed by all the local Pareto optimal points. 
The global Pareto optimal points cannot be distinguished from local optima by means of differential features 
as in the statements presented above. 
Global Pareto optima can only be filtered out a posteriori. 
\end{rem}

\section{Numerical translation of the global analysis approach}
In the following sections we illustrate numerical methods for approximating Pareto sets on the basis of Propositions \ref{prop:firstorder} and \ref{prop:secorder}.  
The procedure is reminiscent of contour plot algorithms for plotting level sets of functions, and is a special instance of general strategies for piecewise--linear approximation algorithms for implicitly defined manifolds \cite{Allgower:1980p2815,Allgower:1985p2832,Allgower:2000p2573,Allgower:2002p2552}. The method determines a simplicial complex approximating the singular set $\Sigma$ and then refines it to the critical set $\theta$ and to the stable critical set $\theta_s$. Because the strategy proposed consists in a continuation method focussed on
the manifold structure of Pareto optima inherited by the singular set,  we coined the term \emph{singular continuation}.

\subsection{First order search algorithm} Algorithm \ref{alg:firstorder} translates numerically Proposition \ref{prop:firstorder}. We start by considering a set of data points $\mathcal{D}=\set{P_1,\dots,P_N}$ where we will evaluate the Jacobian $J_u$, then we build a Delaunay tessellation having $\mathcal{D}$ as nodes.\footnote{In the implementation considered  in what follows we employed the \texttt{qhull} software \cite{Barber:1996mk}, based on the computation of convex hulls, and, in the two dimensional examples, we employed the \textsc{triangle} software \cite{Shewchuk:1996xh,Shewchuk:2002wa}. For iterative schemes, a efficient alternative is offered by the Bowyer--Watson algorithm 
\cite{Bowyer:1981ud,Watson:1981kq}, which is incremental. } We assume that the nodes $P_1,\dots,P_n$ are in general position, i.e., they give rise to a valid Delaunay tessellation. Better results are obtained if the simplexes are ``round'', i.e., they do not possess very thin or very large angles. Special tessellations, e.g., Freuenthal--Kuhn, simplify the operation of ``pivoting'' from a simplex to the adjacent, speeding up the process of glueing together the polytopes composing the implicitly defined manifold\cite{Allgower:1985p2832}. Hereafter, we also assume that the dataset is sufficiently dense to resolve all the feature of the singular manifold $\Sigma$. More precisely, we assume that every connected component of $\Sigma$ intersects at least one of the $(n-m)$--faces $\Delta$ of the tessellation, and the intersection is unique and \emph{transversal}, i.e, $\dim T_x \Sigma \oplus T_x\Delta = n = \max$. Doing so $\Sigma$ is guaranteed to be homeomorphic to its piece--wise linear approximation. 
We denote by $\Sigma$, $\theta$ and $\theta_S$ the portions of the singular set, critical set and stable critical set, respectively, which possibly are contained in a simplex $\Delta$ of the tessellation of the domain considered. Hatted symbols, $\widehat\Sigma$, $\widehat\theta$ and $\widehat{\theta_S}$, denote the corresponding piecewise linear approximations. The details of the algorithm are discussed in subsection \ref{subsec:algo1desc}.
\begin{algorithm}
\caption{\it First order algorithm for approximating the Pareto critical set $\theta$}\label{alg:firstorder}
\begin{algorithmic}[1]
 \STATE Consider a set of data points  $\mathcal{D}=\set{P_1,\dots,P_N}$;
 \STATE evaluate the gradients of the $u_j$ on the data points;
 \STATE build a Delaunay tessellation on the nodes $\mathcal{D}$;
 \FORALL{ Delaunay simplex  $\Delta=\contrazione{P_{i_0},\dots,P_{i_n}}$ in the tessellation}
 	\STATE compute the $(m-1)$--polytope $\widehat\Sigma$ where the 1st order approximation of the Jacobian of $u$ vanishes;
	\STATE extract the sub--polytope $\widehat\theta$ where the vanishing linear combination $\lambda_1 Du_1 + \dots + \lambda_m Du_m = 0$ has non negative coefficients;
 \ENDFOR
 \STATE compose a simplicial complex glueing together adjacent polytopes $\widehat\theta$.
\end{algorithmic}
\end{algorithm}
\begin{rem}
The algorithm assumes $n\geq m$. 
When $m > n$ things extend quite easily, because the singular set is all of the input domain, and as recalled in Section  \ref{ssec:structpareto} the critical set is a stratified set. More precisely, the gradients are always linearly dependent, thus it is sufficient to skip step 5 of Algorithm \ref{alg:firstorder}.
\end{rem}
\subsection{Analysis of simplexes}\label{subsec:algo1desc}
We cycle through the tessellation simplexes $\Delta  = \contrazione{P_{i_1},\dots, P_{i_{n+1}}}$ and approximate the portion of the Pareto critical set $\theta$ possibly contained in $\Delta $.
To determine the linear approximation $\widehat\theta_s$  of the stable Pareto critical portion $\theta_s\cap \Delta $  we recall 
that $\theta$ is contained in the \emph{singular set } $\Sigma$, i.e., the 
set where the rank of the differential $Du(x)$ is  less than maximal:
\begin{equation}
 \theta_s \subseteq \theta \subseteq \Sigma \subseteq W, \qquad
(\Rightarrow \quad \widehat\theta_s \subseteq \widehat\theta \subseteq \widehat\Sigma \subseteq \Delta. ) 
\end{equation}
Adjacent approximate portions $\widehat \theta_s$ are eventually sewed together. 


\subsubsection{Singular set $\widehat\Sigma$} 
We fix a cell $\Delta :=\contrazione{P_1,\dots,P_{n+1}}$.
The Jacobian is an $n\times m$ matrix which rank is non maximal on the singular set $\Sigma$. The rank of $J_u$ drops when the rows are linearly dependent, e.g., when a suitable subset of the $m$--order minors are degenerate. We consider for instance the following submatrices:\footnote {Apart from degenerate cases, all $m$--minors share the same rank, so it is sufficient to consider only a selection of minors involving at least once each of the columns of the Jacobian.} 
\begin{multline}
	M_1 = 
	\begin{pmatrix}
		\deinde{u_1}{x_1} & \dots & \deinde{u_1}{x_m} \\ \vdots & \ddots & \vdots  \\
		\deinde{u_m}{x_1} & \dots & \deinde{u_m}{x_m}
	\end{pmatrix}, \qquad 
	M_2 = 
	\begin{pmatrix}
		\deinde{u_1}{x_2} & \dots & \deinde{u_1}{x_{m+1}} \\ \vdots & \ddots & \vdots  \\
		\deinde{u_m}{x_2} & \dots & \deinde{u_m}{x_{m+1}}
	\end{pmatrix}, \dots\\
	\dots, \qquad
	M_{n-m+1} = 
	\begin{pmatrix}
		\deinde{u_1}{x_{n-m+1}} & \dots & \deinde{u_1}{x_n} \\ \vdots & \ddots & \vdots  \\
		\deinde{u_m}{x_ {n-m+1}} & \dots & \deinde{u_m}{x_n}
	\end{pmatrix}. 
\end{multline}
We denote the number of minors by $r:=n-m+1$ and set $\upomega_j(x) := \det M_j(x)$, for $j=1,\dots,r$ and consider all the $(r+1)$--faces of the cell $\Delta$, i.e.,  for  every $\set{i_1,\dots,i_{r+1}}\subseteq\set{1,\dots,n+1}$, with $i_1<i_2<\dots<i_{r+1}$, 
we consider the simplex\\ 
$\contrazione{P_{i_1},\dots,P_{i_{r+1}}}$. 
The solution $(\mu_{1},\dots,\mu_{r+1})$ of the system:
\begin{equation}
	\begin{cases}
		\mu_{1} \upomega_1(P_{i_1}) + \dots + \mu_{r+1} \upomega_1(P_{i_{r+1}}) & = 0\\
		\vdots \qquad \vdots \qquad \vdots & \vdots \\
		\mu_{1} \upomega_{r+1}(P_{i_1}) + \dots +\mu_{r+1} \upomega_{r+1}(P_{i_{r+1}})& = 0\\
		\mu_{1} +\dots +\mu_{r+1} & = 1		
	\end{cases}\label{eq:minorsystem}
\end{equation} 
leads to a \emph{singular vertex} $Q:= \mu_1 P_{i_1} + \dots + \mu_{r+1} P_{i_{r+1}} $ of $\widehat\Sigma$ if all $\mu_{j} >0$, i.e., if $Q$ is contained in the $(r+1)$--face of $\Delta$ considered. 

The (possibly empty) singular set $\widehat\Sigma$ is an $(m-1)$--polytope defined as the convex hull of the singular vertices $Q$. 

\subsubsection{Critical set $\widehat\theta$}

%

In the previous subsection we have detected the singular set $\Sigma$, on the basis of the fact that on the singular set the gradients are linearly dependent. On the other hand, on the critical set $\theta$ there exists a positive linear combination of the gradients giving zero. Thus we proceed by estimating 
the coefficients $\lambda_j$ of the vanishing  linear convex combination of the gradients, and cutting out the critical set $\theta$ from $\Sigma$ by intersection with the half spaces where the linear interpolations of the $\lambda$s are positive. 

More precisely, we solve the system:
\begin{equation}\label{eq:lambdasys}
\begin{cases}
	\lambda_1 Du_1(P)+\dots+\lambda_m Du_m(P) = 0, \\
	\lambda_1 +\dots+\lambda_m  = 1, 
\end{cases}
\end{equation}
for $\lambda_1,\dots,\lambda_m$.
The Jacobian of $u$ has rank $m-1$ in almost all the points of the singular set (generic hypothesis), thus the system (\ref{eq:lambdasys}) has rank $m$, and by the implicit function theorem $\lambda_j$ are smooth functions of $P$. As a result the level sets $\set{\lambda_j(P)=0}$, which define the boundary of $\theta$, are smooth manifolds. At the first order we are working with, the requests $\lambda_j(P)\geq 0$ cut out half spaces in $\widehat \Sigma$, defining  possibly a critical sub polytope $\widehat \theta$ in $\Delta$.

We notice that we do not know the actual values of $Du$ on the singular vertices, i.e., the nodes of $\widehat\Sigma$. 
Nevertheless, we can estimate them by linearly interpolating the values of $Du$ on the data nodes $P_{i_1},\dots, P_{i_{r+1}}$ defining the vertex $Q$ in $\widehat\Sigma$. By taking the coefficients $\mu_{1},\dots,\mu_{r+1}$ solving the system (\ref{eq:minorsystem}), we are guaranteed that the $\widehat{Du}_j(Q) := \mu_{1} Du_j(P_{i_1}) +\dots+\mu_{r+1} Du_j(P_{i_{r+1}})$ are linearly dependent, and we are justified in solving for the vanishing linear combination $\lambda_1 \widehat {Du}_1(Q)+\dots +\lambda_m \widehat {Du}_m(Q)=0$.

\subsection{Convergence analysis for $\theta$}\label{sssec:convergence} Let us consider for this section a single simplex $\Delta$. 
Intuitively, it is clear that the approximation $\widehat\Sigma$ of $\Sigma$ obtained by linear interpolation is quadratically good because of Taylor's theorem. 
We state more precisely this result in the set--wise context we have adopted.\footnote{General estimates on the accuracy of piecewise--linear approximations of implicitly defined manifolds are proved in \cite{Allgower:1989p2804}.}
The distance between the sets $A$ and $B$ can be measured in terms of Hausdorff distance:
\begin{equation}
	d_\mathcal{H}(A,B) :=  \max \tonde{ \sup_{x\in A} \inf_{y\in B} d(x,y), \sup_{y\in B} \inf_{x\in A} d(x,y)}.
\end{equation}
\begin{teo}[Quadratic precision for $\Sigma$] \label{teo:quadraticsing} Let $P_0,\dots,P_{n}$ be in general position, and such that $Du$ has maximum rank. We denote by $\Delta =\contrazione{P_0,\dots,P_{n}}$ the $n$--simplex which vertices are those points. Let $\upomega_1(x),\dots,\upomega_{r}(x)$ a selection of independent minors of $Du$, and $\widehat \upomega_j(x)$
be the 1st order interpolation of the values of $\upomega_j$ on the nodes $P_i$. Assume $0$ is a regular value for $\upomega_1,\dots,\upomega_r$, that the zero levels of the $\upomega_j$ are transversal and that $\upomega_j(P_i) \neq 0$, for all $i,j$. 
Then
\begin{gather}
	\Sigma = \set{\upomega_1(x)=0} \cap \dots \cap \set{\upomega_{r}(x)=0}, \\ 
	\widehat\Sigma = \set{\widehat \upomega_1(x)=0} \cap \dots \cap \set{\widehat \upomega_{r}(x)=0},
\end{gather}
and there exists a constant $\mathsf{C}$:
\begin{equation}
	d_\mathcal{H}(\Sigma,\widehat \Sigma) \leq \mathsf{C} \delta^2,
\end{equation}
where $\delta>0$ is the diameter of the simplex $\Delta$.
\end{teo}
\begin{proof} First of all, we notice that the $\upomega_k(x)$ are polynomials of the first derivatives of $u$, thus they are smooth in our hypotheses.	Inductively, consider $r=1$ and denote $\upomega=\upomega_1$. By Taylor's theorem, 
	\begin{equation}
		\begin{split}
 	\upomega(x) = \widehat{\upomega}(x) + O\tonde{\abs{x-P_0}^2}, \qquad \text{i.e.,}\\
	        		\abs{\upomega(x)-\widehat{\upomega}(x)} \leq C\delta^2,
		\end{split}
	\end{equation}
for a suitable $C>0$. Assume, without loss of generality, $\upomega>0$ on $P_0,\dots,P_k$ and $\upomega<0$ on $P_{k+1},\dots,P_N$. Let $\ep:= C\delta^2$. (See panel (a) of Figure \ref{fig:convproof}).
\begin{figure}[htbp]
\begin{center}
\begin{tabular}{ccc}
 \includegraphics[width=40mm]{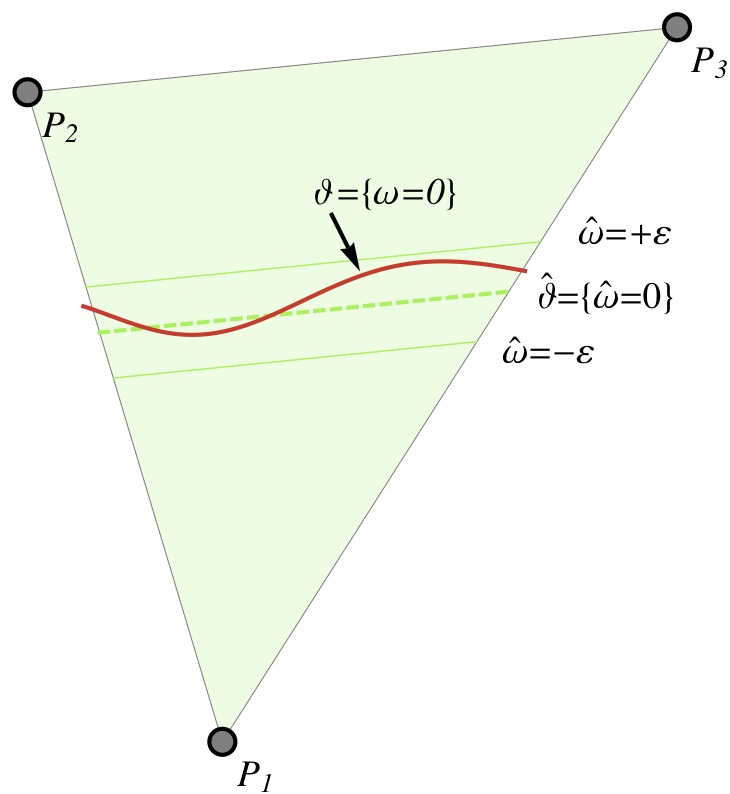} &
	\includegraphics[width=38mm]{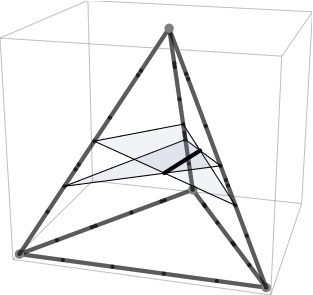} &
	\includegraphics[width=38mm]{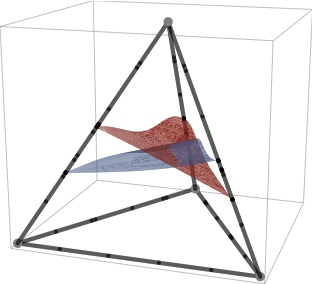}\\
	(a) & (b) & (c)
\end{tabular}
\caption{Critical simplexes, with representations of the critical set $\theta$ and its first order approx $\widehat\theta$. Panel (a): two functions in two dimensions. Panel (b) and (c): two functions in three dimensions. In panel (b) the the thick line is the first order approximation $\widehat\theta$, in panel (c) the critical set $\theta$ is the curve of intersection of the two level surfaces $\upomega_1(x)=0$ and $\upomega_2(x)=0$.  }
\label{fig:convproof}
\end{center}
\end{figure}
Thus the zero levels of $\upomega$ and $\widehat \upomega$ are comprised between the $\pm\ep$ levels of $\widehat{\upomega}$, i.e., 
\begin{equation}
	\set{x\in \Delta\taleche \upomega(x) = 0} \subseteq \set{x\in \Delta\taleche -\ep \leq \widehat \upomega(x) \leq \ep}.
\end{equation}
By the compactness of $\Delta $, there exist $x_0\in \set{\widehat{\upomega}=0}$, $x_\ep\in \set{\widehat{\upomega}=\ep}$, such that, 
\begin{equation}
	d_{\mathcal{H}}\tonde{\set{\widehat{\upomega}=0},\set{\widehat{\upomega}=\ep}} = \abs{x_0 - x_\ep}
\end{equation}
and it holds that
\begin{equation}
	\widehat{\upomega}(x_\ep) - \widehat{\upomega}(x_0) =\abs{ \widehat{\upomega}'(x_0)\cdot (x_\ep-x_0) } = \abs{\deinde{\widehat{\upomega}}{w}(x_0)} \abs{x_\ep-x_0},
\end{equation}
where $w=\frac{x_\ep-x_0}{\abs {x_\ep-x_0}}$. By means of an elementary linear algebra argument we have also that 
\begin{equation}
	\abs{\deinde{\upomega}{w}(x_0)} \geq \min_{\substack{i=1,\dots,k,\\ i'=k+1,\dots,n}} 
		\abs{\frac{\widehat{\upomega}(P_i) - \widehat{\upomega}(P_{i'})} {P_i - P_{i'}}} =: B > 0, 
\end{equation}
so we can conclude 
\begin{equation}
	\abs{x_0 - x_\ep} \leq \frac{\ep}{B} = \frac{C}{B}\delta^2 = \mathsf{C} \delta^2
\end{equation}
and eventually
\begin{equation}
	d_{\mathcal{H}}\tonde{\set{\upomega=0},\set{\widehat \upomega = 0}} \leq \mathsf{C} \delta^2.
\end{equation}

Consider now $r>1$, and assume inductively that the Hausdorff distance between the intersection of the zero levels of $r-1$ transversal functions and the intersection of the zero level of the respective linear interpolations on an $n$--simplex is quadratically smaller than the simplex diameter. Thus we have 
\begin{gather}
	\Sigma_- = \set{\upomega_1(x)=0}\cap\dots \cap \set{\upomega_{r-1}(x)=0},\\
	\widehat\Sigma_- = \set{\widehat  \upomega_1(x)=0}\cap\dots \cap \set{\widehat  \upomega_{r-1}(x)=0}, \\
	d_\mathcal{H}\tonde{\Sigma_-, \widehat\Sigma_- } \leq C \delta ^2.
\end{gather}
If we consider one more function $\upomega_r(x)$ on the linear space $\widehat\Sigma_-$, we are in the previous case, so there exists $A>0$,
\begin{equation}
	d_{\mathcal{H}}\tonde{\widehat \Sigma_- \cap \set{\upomega_r(x)=0},  \widehat\Sigma_- \cap \set	
	{\widehat \upomega_r(x)=0}} \leq A \delta^2.
\end{equation}
By transversality of the $\upomega_1,\dots,\upomega_r$, the fact holding for the linear space $\widehat\Sigma_-$ holds also for the compact manifold with boundary $\Sigma_-$ and the function $\upomega_r$ (see Lemma \ref{lem:mani} for the details).  Thus there exists a $B>0$ such that 
\begin{equation}
	d_{\mathcal{H}}\tonde{\Sigma_- \cap \set{\upomega_r(x)=0}, \Sigma_- \cap \set	
	{\widehat \upomega_r(x)=0}} \leq B \delta^2.
\end{equation}
On the other hand, for the intersection of the zero levels of the transversal functions $\upomega_1$, \dots,$\upomega_{r-1}$
 on the linear space $\set{\widehat \upomega_r(x)=0}$, by the inductive hypothesis there exists $C>0$
\begin{equation}
	d_{\mathcal{H}}\tonde{\set{\upomega_r(x)=0}\cap \Sigma_-,  \set{\upomega_r(x)=0}\cap\widehat \Sigma_- } \leq C \delta^2,
\end{equation}
 so the thesis is proved by the triangle inequality.
\end{proof}

\begin{lem}\label{lem:mani} Let $\Sigma$ a manifold with boundary diffeomorphic to an $n$--simplex $\Delta$, and $\upomega:\Sigma\to\R$ differentiable and without critical points inside $\Sigma$. 
We have $\upomega(x)=\widehat \upomega(x) +O(\delta^2)$, where $\widehat \upomega$ is an affine approximation and $\delta$ is the simplex diameter. Thus we have that 
\begin{equation}
	d_\mathcal{H}\tonde{\set{\upomega(x)=c},\set{\widehat \upomega(x) = c}} \leq C \delta^2, \qquad \text{for all} \ c\in \R.
\end{equation}
\end{lem}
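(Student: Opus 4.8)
The plan is to reduce the statement to the base case $r=1$ of Theorem~\ref{teo:quadraticsing}, the only new feature being that the domain is now a curved compact manifold with boundary $\Sigma$ rather than the flat simplex $\Delta$. The single ingredient that must survive the passage to the curved setting is a uniform lower bound on the derivative of $\upomega$ transverse to its level sets; this is exactly what the hypothesis \emph{no critical points inside $\Sigma$} provides.

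First I would record the two quantitative facts. Endow $\Sigma$ with the metric induced from the ambient $\R^n$. Since $\upomega$ has no critical points and $\Sigma$ is compact, the continuous map $x\mapsto \abs{\nabla\upomega(x)}$ attains a strictly positive minimum, so $\abs{\nabla\upomega(x)}\geq B_1>0$ on $\Sigma$. As $\widehat\upomega$ is the affine interpolant of a $C^2$ function on a nondegenerate simplex of diameter $\delta$, the standard interpolation estimate gives $\abs{\nabla\upomega-\nabla\widehat\upomega}=O(\delta)$ (equivalently, the spread of the vertex values bounds $\nabla\widehat\upomega$ from below exactly as the constant $B$ does in the base case), whence $\abs{\nabla\widehat\upomega}\geq B_2>0$ for $\delta$ small. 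The $C^0$ hypothesis $\upomega=\widehat\upomega+O(\delta^2)$ yields the slab bound $\abs{\upomega(x)-\widehat\upomega(x)}\leq \ep:=K\delta^2$, so that $\set{\upomega=c}\subseteq\set{\abs{\widehat\upomega-c}\leq\ep}$ and $\set{\widehat\upomega=c}\subseteq\set{\abs{\upomega-c}\leq\ep}$.

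Next I would bound both sides of the Hausdorff distance by a gradient--flow (mean value) argument. Given $y\in\set{\widehat\upomega=c}$ one has $\abs{\upomega(y)-c}\leq\ep$; following the unit--speed integral curve of $\nabla\upomega/\abs{\nabla\upomega}$ in the direction that decreases $\abs{\upomega-c}$, the value $\upomega$ changes at rate $\geq B_1$, so a point of $\set{\upomega=c}$ is met after arclength $\leq\ep/B_1$, hence within Euclidean distance $\leq\ep/B_1$. The symmetric argument with the flow of $\nabla\widehat\upomega$ and the bound $B_2$ sends each $x\in\set{\upomega=c}$ to a point of $\set{\widehat\upomega=c}$ within $\ep/B_2$. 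Since $B_1,B_2,K$ are independent of $c$, this gives $d_\mathcal{H}\tonde{\set{\upomega=c},\set{\widehat\upomega=c}}\leq C\delta^2$ uniformly in $c$, with $C:=K\max\tonde{B_1^{-1},B_2^{-1}}$.

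The main obstacle is the behaviour near $\partial\Sigma$: an integral curve started close to the boundary might leave $\Sigma$ before reaching the target level set. I would control this by observing that the arclength to be travelled is only $O(\delta^2)$, so the difficulty is confined to an $O(\delta^2)$--thin collar of $\partial\Sigma$; there one invokes the transversality of $\set{\upomega=c}$ and $\set{\widehat\upomega=c}$ to $\partial\Sigma$ (inherited, as in the inductive step of Theorem~\ref{teo:quadraticsing}, from the transversality of the $\upomega_j$), so that both level sets enter the collar through points that its product structure matches to within the same $O(\delta^2)$ order. This boundary analysis is the only place where the curvature of $\Sigma$, rather than the convexity of $\Delta$, genuinely enters the estimate.
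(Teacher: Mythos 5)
Your argument is correct in substance, reaches the same estimate, and rests on the same two quantitative ingredients as the paper's proof --- the $O(\delta^2)$ slab bound $|\upomega-\widehat\upomega|\leq K\delta^2$ and a uniform lower bound on the derivative transverse to the level sets --- but it takes a genuinely different route in how it copes with the curvature of $\Sigma$. The paper disposes of the curvature at the outset: it fixes a diffeomorphism $\phi:\Delta\to\Sigma$ with two--sided derivative bounds $\xi>|\phi'|>\eta>0$, pulls $\upomega$ and $\widehat\upomega$ back to the flat simplex, and then reruns the one--dimensional mean--value argument from the base case of Theorem \ref{teo:quadraticsing}: for a point $y^\star$ of the pulled--back level set it chooses a segment $[y_1,y_2]$ joining a node where $\upomega\circ\phi$ is negative to a face whose nodes carry positive values, locates by continuity a zero $\widehat y$ of $\widehat\upomega\circ\phi$ on that segment, and concludes $|y^\star-\widehat y|\leq \mathsf{C}\delta^2$ from a lower bound on the directional derivative of $\widehat\upomega\circ\phi$ (the node--value spread combined with $\eta$). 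You instead stay on the curved manifold, replacing the segment device by intrinsic gradient flow, with the transverse derivative bounded below once by compactness ($B_1$, from the no--critical--point hypothesis) and once by the first--order interpolation estimate ($B_2$, which additionally requires shape--regularity of the simplex, an assumption the paper makes only informally). What the pullback buys the paper is that the boundary of $\Sigma$ never intervenes: the connecting segments are chosen inside $\Delta$ between its nodes and faces, so no trajectory can escape. Your flow lines, by contrast, can exit $\Sigma$, which forces the collar--plus--transversality patch that you only sketch; note that this is also where the delicate levels hide (for $c$ near the extrema of $\upomega$ on $\Sigma$ the set $\{\widehat\upomega=c\}$ may be empty, and only a boundary argument of the kind you outline can rescue, or appropriately restrict, the statement there). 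What your route buys in exchange is quantitative transparency: an explicit constant $C=K\max(B_1^{-1},B_2^{-1})$, visible uniformity in $c$, and no need to exhibit the bi--Lipschitz bounds $\eta,\xi$, whose existence the paper simply postulates.
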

\begin{proof}
	Let $\Delta\overset{\phi}{\To} \Sigma$ be a diffeomorphism, with $\xi> \abs{\phi'} > \eta > 0$. Thus we have, for all $y\in \Delta $, 
	$$\upomega\circ\phi(y) = \widehat \upomega\circ\phi(y) + O(\delta^2).$$
For any $y^\star$ in the zero level of $\upomega\circ\phi$ we can find a line segment $[y_1,y_2]$, with $y_1$ being one of the nodes of $\Delta $ where $\upomega\circ\phi$ is negative and $y_2$ is a point on a face of  $\Delta$ where on the forming nodes $\upomega\circ\phi$ is positive. 
By continuity there exists a point $\widehat y$ on the line $[y_1,y_2]$ where $\widehat \upomega\circ\phi$ is zero. 

Thus
\begin{equation}
	\upomega\circ\phi(y^\star) - \upomega\circ\phi(\widehat y) = \widehat \upomega\circ\phi(y^\star)  - \widehat \upomega\circ\phi (\widehat y) + O( \delta^2) = \widehat \upomega' \circ \deinde{\phi}{w} \abs{y^\star-\widehat y} + O(\delta^2), 
\end{equation}
which gives 
\begin{equation}
	\abs{y^\star-\widehat y} \leq \mathsf{C} \delta^2.
\end{equation}
\end{proof}
\begin{osserv} The hypotheses of Theorem \ref{teo:quadraticsing} are \emph{generic} in the sense that they hold for a open and dense set of functions. In particular, 0 is assumed to be a regular value for $\upomega_1,\dots,\upomega_r$ because the set of the singular values has zero measure (Sard's Theorem). See \cite{Arnold:1968fx,Guillemin:1974hs,Levine:1976bs,Mather:1971sh,Michor:1985wb}.
\end{osserv}

\begin{teo}
	In the simplex $\Delta =\contrazione{P_0,\dots,P_n}$, if $\theta$ is the Pareto critical set and $\widehat\theta$ is its linear approximation, there exists $ \mathsf{C}>0$ such that
	\begin{equation}
		d_{\mathcal{H}} \tonde{\theta,\widehat \theta} \leq \mathsf{C} \delta^2.
	\end{equation}
\end{teo}
\begin{proof}
	The $\lambda_j$ computed as described in Algorithm \ref{alg:firstorder} are first order approximations to smooth functions, apart from a measure zero set of points. Thus the conclusions of Theorem \ref{teo:quadraticsing} 
	apply as well to the intersection of $\Sigma$ with the half spaces $\lambda_j(P) \geq 0 $. 
\end{proof}


\subsection{Second order algorithm} In Algorithm \ref{alg:secorder} we describe how to extract the stable critical set $\theta_s$, i.e., the set of locally Pareto optimal points, from the critical set $\theta$ determined in the first order algorithm \ref{alg:firstorder}. 
\begin{algorithm}[H]
 \caption {\it Second order algorithm for the stable Pareto critical set $\theta_s$}
 \label{alg:secorder}
\begin{algorithmic}[1]
 \STATE Consider a set of data points  $\mathcal{D}=\set{P_1,\dots,P_N}$ and proceed as in Algorithm \ref{alg:firstorder}.
 \FORALL{ Delaunay simplex  $\Delta=\contrazione{P_{i_0},\dots,P_{i_n}}$ in the tessellation}
 	\STATE Compute the matrix  of the second derivatives $D^2u$  on the nodes $P_{i_0},\dots,P_{i_n}$	
	\STATE On the vertices $Q$ of $\widehat\theta$, linearly interpolate the second derivatives $\widehat {D^2u}(Q)$
	\STATE Compute a basis $w_1,\dots,w_{n-m+1}$  for $\ker Du(Q)$, and set 
	$\widehat H (Q) := w^\top\cdot \tonde{\lambda_1(Q) \widehat {D^2u_1}(Q)+\dots+ \lambda_m(Q) \widehat{ D^2u_m}(Q)} \cdot w$. 
	\STATE Compute the eigenvalues $\sigma_1,\dots,\sigma_{n-m+1}$ of $\widehat H(Q)$.
	\STATE Cut out from $\widehat\theta$ the sub polytope $\widehat\theta_s$ where $\sigma_k\leq 0$ for all $k=1,\dots,n-m+1$
 \ENDFOR
 \STATE Compose a simplicial complex glueing together adjacent polytopes $\widehat\theta_s$
\end{algorithmic}
\end{algorithm}
The second derivatives could be also approximated computing the finite differences of the values of the gradients on the nodes of the $n$--simplex.
Indeed, setting 
$$
	v_i = P_i - P_0, \qquad i=1,\dots,n,
$$
we have 
\begin{equation}
	D^2u = \tonde{\deindesecdue{u}{x_i}{x_j}}_{i,j} = \sum_k \deindesecdue{u}{v_k}{x_j}\cdot\deinde{v_k}{x_i} \simeq \sum_k \tonde{\nabla u(P_k) - \nabla u(P_0)}_j\cdot \tonde{P_k-P_0}_i.
\end{equation}

%

Using this formula,  the quadratic precision cannot be guaranteed for locating boundary points of the stable critical set. Furthermore, because the boundary faces  belong to different simplexes, 
the estimated boundary points for $\theta_s$ would jump from  simplex to simplex. 

On the other hand, the formula will be correct for discriminating the nature of inner stable critical points,  
without extra computations. Boundary simplexes can thus analyzed with second derivatives, allowing the computation of the boundary of $\widehat\theta_s$.
 

\section{Applications}
\subsection{Two functions in two dimensional examples}

A series of examples in two dimensions is presented below. 
Via global analysis one sees that, for structurally stable mappings, the Pareto critical set is a 1--dimensional manifold with boundary contained in $\Sigma$.
Critical points can only be of one of the following types:
\begin{enumerate}
 \item \emph{fold}, i.e., the mapping is locally equivalent to $u_1 = x_1$, $u_2 = x_2^2$,
 \item \emph{cusp},  i.e.,   the mapping is locally equivalent to  $u_1 = x_1$, $u_2 = x_1x_2 - \frac{1}{3}x_2^3$.
\end{enumerate}
Therefore, the branches of Pareto critical points are composed by folds, which intersect only pair--wise and at non--zero angles. Some local branches terminate in cusps, where the status of critical points can change from stable to unstable.
Finally, images of folds and cusps do not intersect \cite{Arnold:1968fx,Wan:1975bf}.

Functions gradients are evaluated on a grid of regular triangles and the critical set $\theta$ is estimated according to the first order algorithm. 
Boundary points are marked with black diamonds.
The generalized Hessian is estimated on the nodes of the critical set, computing second derivatives in the triangles where its index changes, allowing to estimate the position of the  points separating stable from unstable branches. 
Cusps are marked by a black star, stable branches are colored in red, unstable branches in orange and finally non critical branches are gray.
\begin{esem} \label{ex:triv}
Consider two negative definite quadratic polinomials. The critical stable set is a curve joining the two individual critical points. Other singular branches occur in outer regions of the domain.
\begin{equation}
   \begin{split}
	u_1(x,y) & =  -1.05 x^2 - 0.98 y^2,\\
	u_2(x,y) & =-0.99(x - 3)^2 - 1.03(y - 2.5)^2.
   \end{split}
\end{equation}
See Figure \ref{fig:triv}.
\begin{figure}
\centering
\begin{tabular}{cc}
\includegraphics[width=75mm]{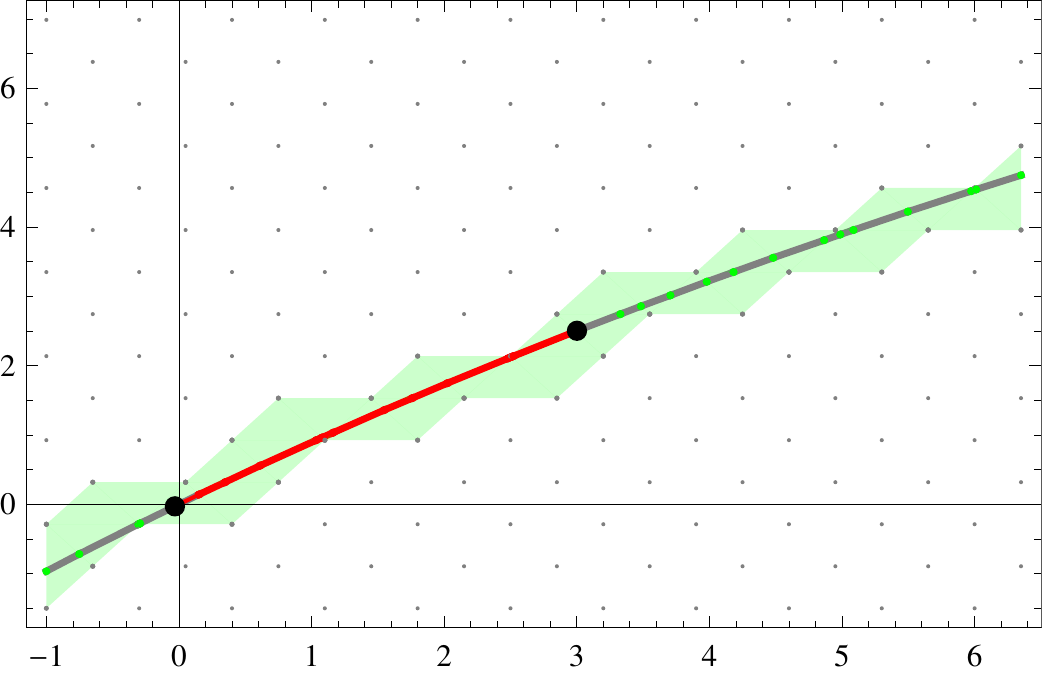}&
\includegraphics[width=45mm]{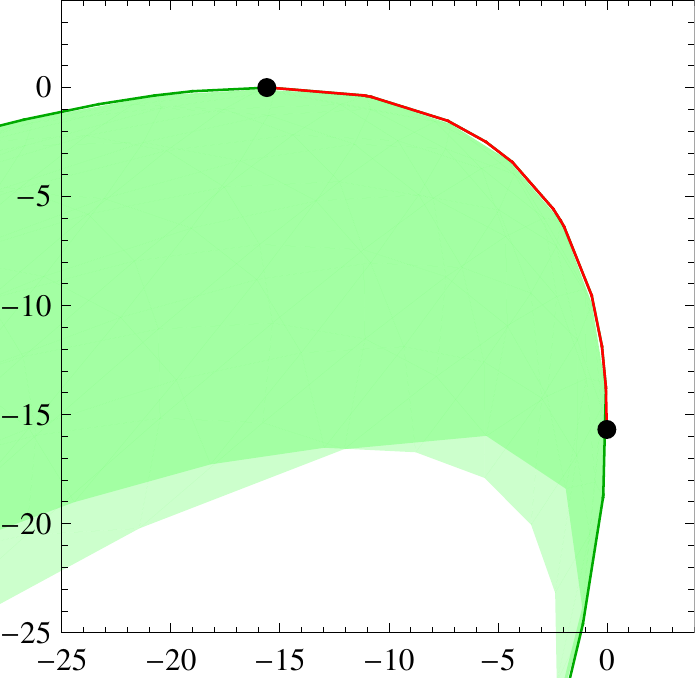} \\
(a) & (b)
\end{tabular}
\caption{Pareto critical set (a) and the Pareto front (b), for Example \ref{ex:triv}.}
\label{fig:triv}
\end{figure}
\end{esem}

\begin{esem} \label{ex:sma1}
This example is taken from  \cite{Smale:1975oy}.\begin{equation}
   \begin{split}
	u_1(x,y) & =  -y,\\
	u_2(x,y) & =  \frac{y - x^3}{x + 1}.
   \end{split}
\end{equation}	The critical set is a single curve split in a stable and an unstable branch, while the separating point is a cusp.
See Figure \ref{fig:sma1}.
\begin{figure}
\centering
\begin{tabular}{cc}
\includegraphics[width=75mm]{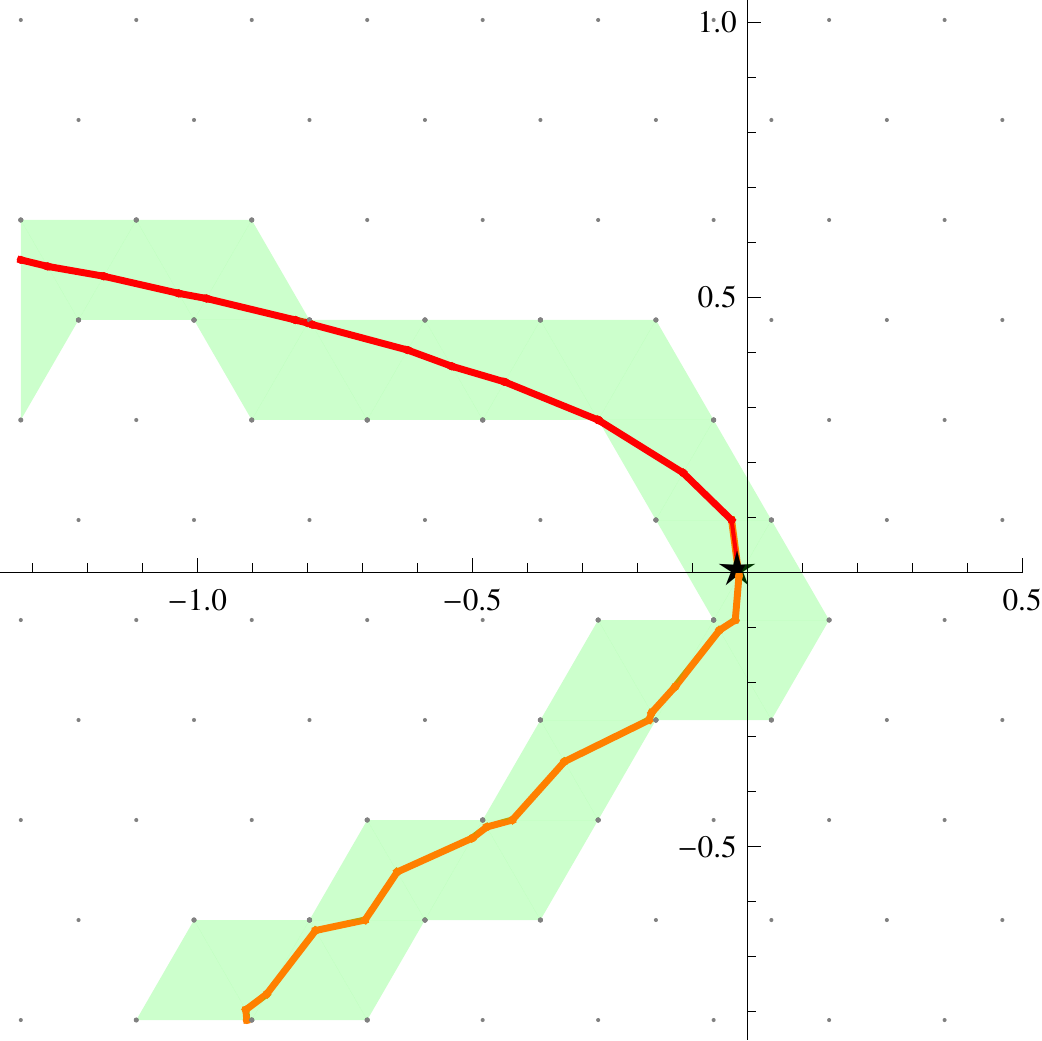}&
\includegraphics[width=45mm]{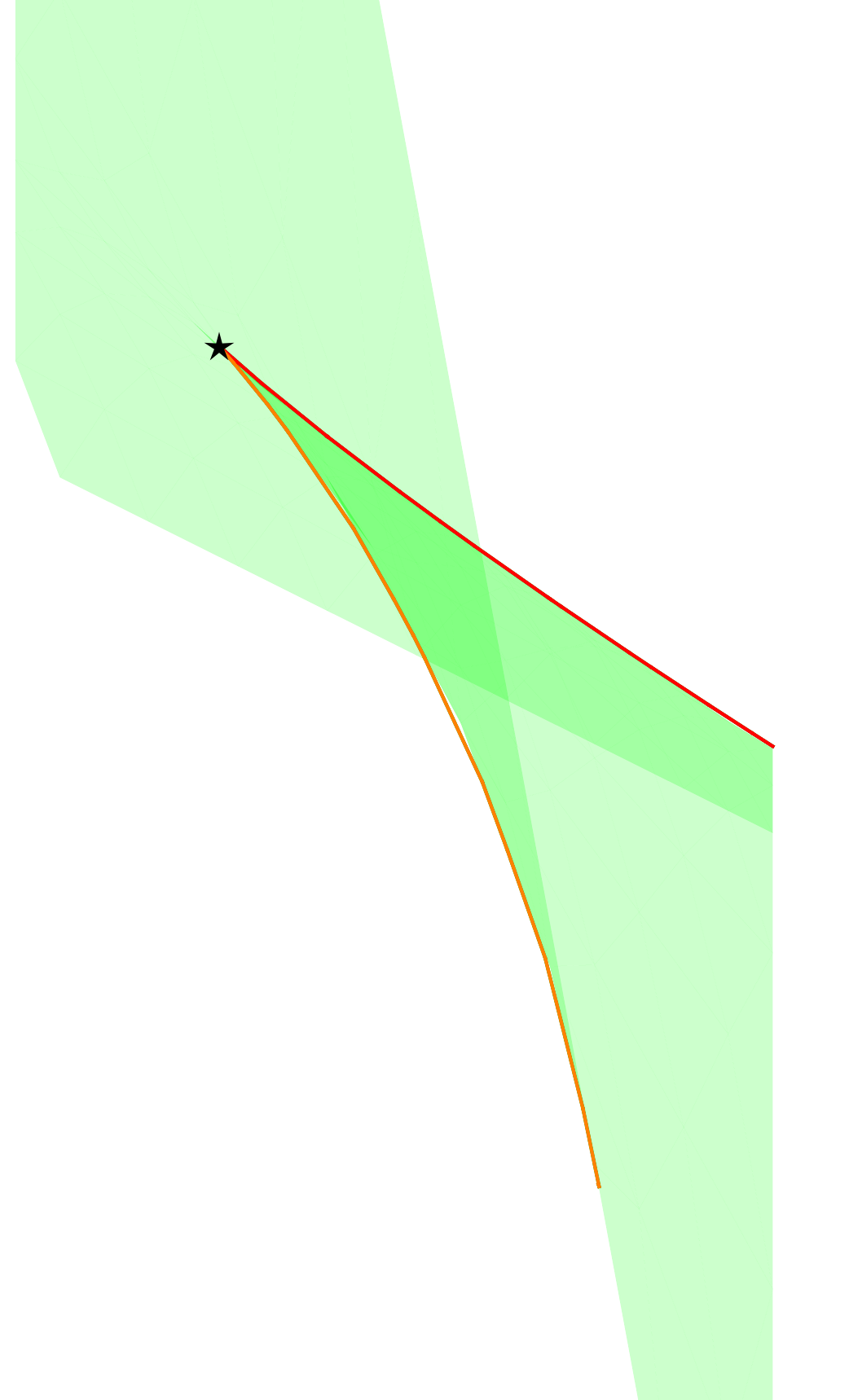} \\
(a) & (b)
\end{tabular}
\caption{ Example \ref{ex:sma1}. Red line: stable critical set. Orange line: unstable critical set.}
\label{fig:sma1}
\end{figure}
\end{esem}

\begin{esem} \label{ex:sms}
In the following mapping there are two second order polynomials, one negative definite and the other indefinite. The outcome is an (unbounded) global Pareto front and a local unbounded front terminating in a cusp.
\begin{equation}
   \begin{split}
	u_1(x,y) & =  -x^2 - y^2,\\
	u_2(x,y) & =-(x - 6)^2 + (y + 0.3)^2,
   \end{split}
\end{equation}	
See Figure \ref{fig:sms}. 
\begin{figure}
\centering
\begin{tabular}{cc}
\includegraphics[width=75mm]{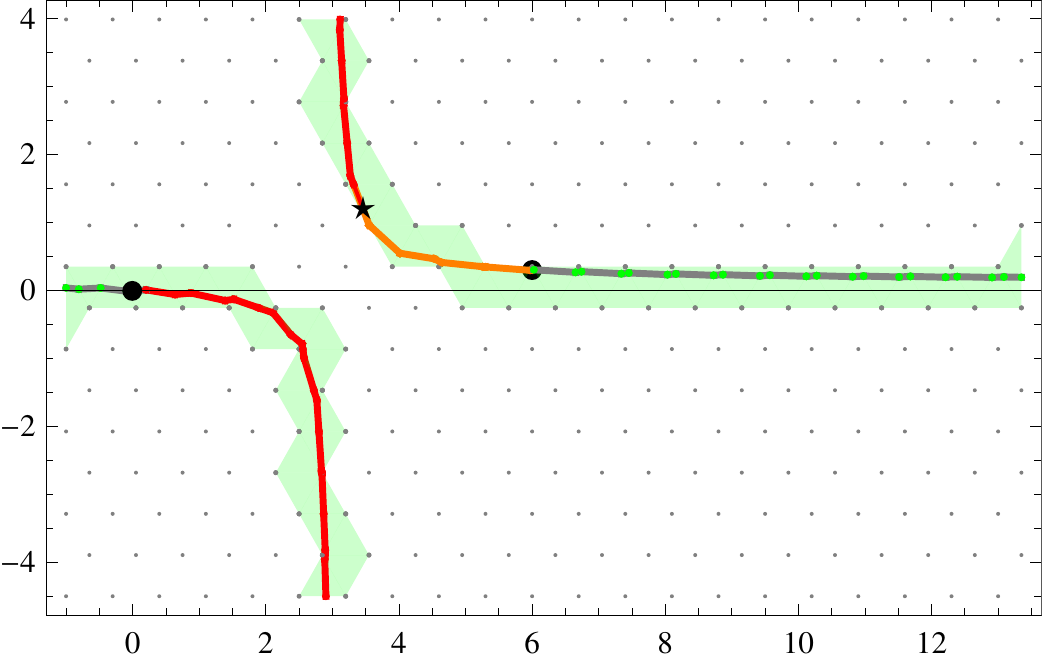}&
\includegraphics[width=45mm]{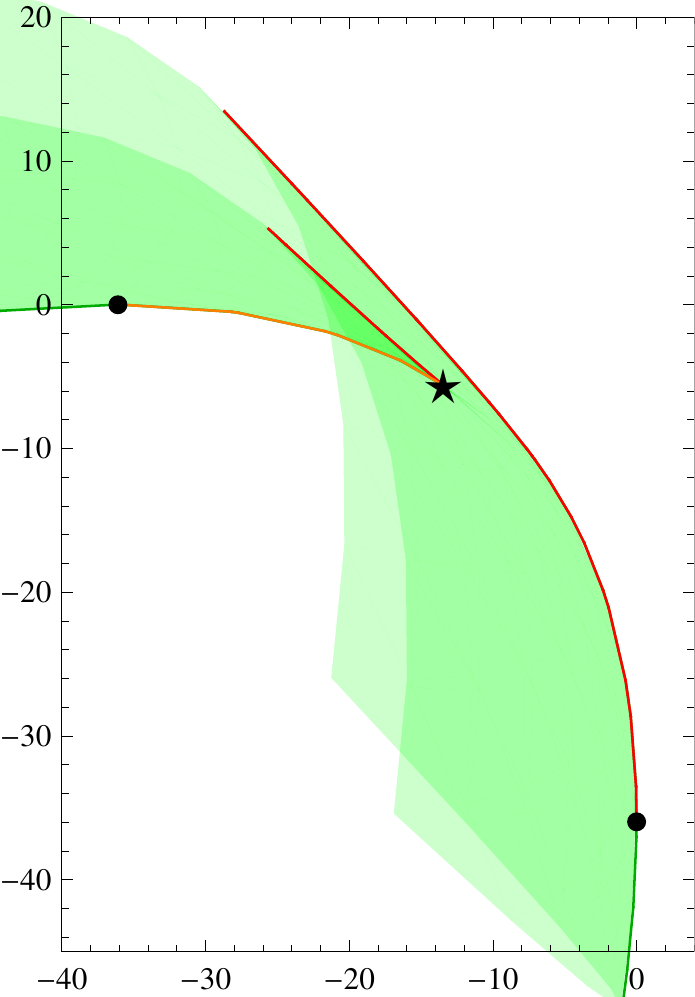} \\
(a) & (b)
\end{tabular}
\caption{Example \ref{ex:sms}.}
\label{fig:sms}
\end{figure}
\end{esem}

\begin{esem} \label{ex:noncv}
The following mapping is composed by a quadratic polynomial and a bimodal function. The resulting singular set is composed by an unbounded branch and two loops. One of the loops is critical and forms a local Pareto front delimited by two cusps, while the other loop is non critical. 
\begin{equation}
   \begin{split}
	u_1(x,y) & =  -x^2 - y^2 -   4 (\exp(-(x + 2)^2 - y^2) +  \exp( - (x - 2)^2 - y^2)),\\
	u_2(x,y) & =-(x - 6)^2 -(y + 0.5)^2.
   \end{split}
\end{equation}	
See Figure \ref{fig:noncv}
\begin{figure}
\centering
\begin{tabular}{cc}
\includegraphics[width=75mm]{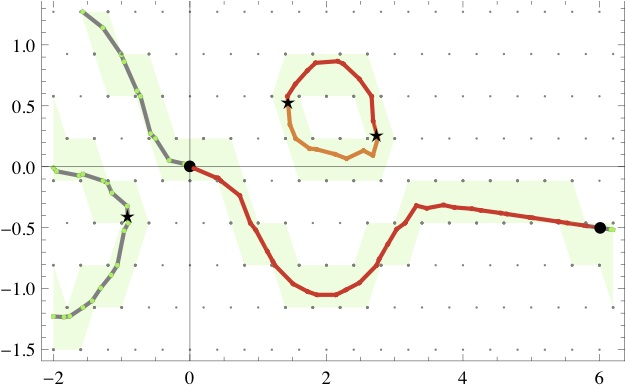}&
\includegraphics[width=45mm]{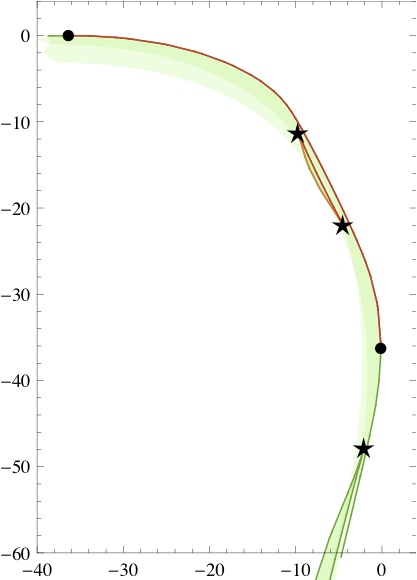} \\
(a) & (b)
\end{tabular}
\caption{Example \ref{ex:noncv}.}
\label{fig:noncv}
\end{figure}
\end{esem}

\subsection{Higher input dimension}

\begin{esem} \label{ex:locglob}
The following mapping demonstrates the capabilities of the method in distinguishing local and global features of the Pareto set. A widespread optimal branch is surpassed by a local branch. The sharper branch is composed by an unstable part (orange) and a stable part (red) which  is interrupted by noncritical insertions (gray). 
Nevertheless, as illustrated in figures \ref{fig:locglob}(a--b) the two separate branches are properly recognized by the algorithm and moreover the transitions among critical/non critical and stable/unstable intervals are detected. For comparison, the outcome of the application of a commercial implementation of normal boundary intersection by Das and Dennis  \cite{Das:1998zh} is shown in figures \ref{fig:locglob}(c--d).\footnote{Applications of \textsf{modeFRONTIER}$^\circledR$ are a courtesy by E. Rigoni at \textsc{esteco}}.
The starting grid (green dots) was $10\times20\times10$, and we considered 50 NBI subproblems. The sequence of NBI points is marked by black stars.   
For this particular problem, NBI tracks correctly the broad Pareto optimal branch, in the sense that it produces a parametrization of it.  However, the smaller branch is missed, although some of the points of the starting grid were close to this critical zone. 
It is clear that point--wise strategies suffer at tracking the Pareto optimal set and fail at performing widespread trade studies, apart from small intervals where different fronts are far apart and do not change status (from critical to non--critical, stable to unstable, and so on).  
\begin{equation}
   \begin{split}
p_0 &= (0.0, 0.15, 0.0)^\top,\\
p_1 &= (0.0, -1.1, 0.0)^\top,\\
M &= 
\begin{pmatrix}
 -1.0 & -0.03 & 0.011\\
 -0.03 & -1. & 0.07\\ 0.011 & 0.07 & -1.01
\end{pmatrix}, \\
g(x,y,z,M,p,\sigma)&= \sqrt{\frac{2\pi}{\sigma}} e^{\frac{{((x,y,z)^\top-p)^\top M ((x,y,z)^\top-p))}}{\sigma^2}}, \\
f(x,y,z)& =  
 g(x,y,z,M,p_0,0.35) +g(x,y, 0.5 z,M,p_1,3.0), \\
u1(x,y,z)& = \frac{\sqrt{2}}{2} x + \frac{\sqrt{2}}{2} f(x,y,z), \\
u2(x,y,z)& = - \frac{\sqrt{2}}{2} x + \frac{\sqrt{2}}{2} f(x,y,z). 
   \end{split}
\end{equation}	
%
\begin{figure}
\centering
\begin{tabular}{cc}
\hspace*{-5mm} \includegraphics[width=78mm]{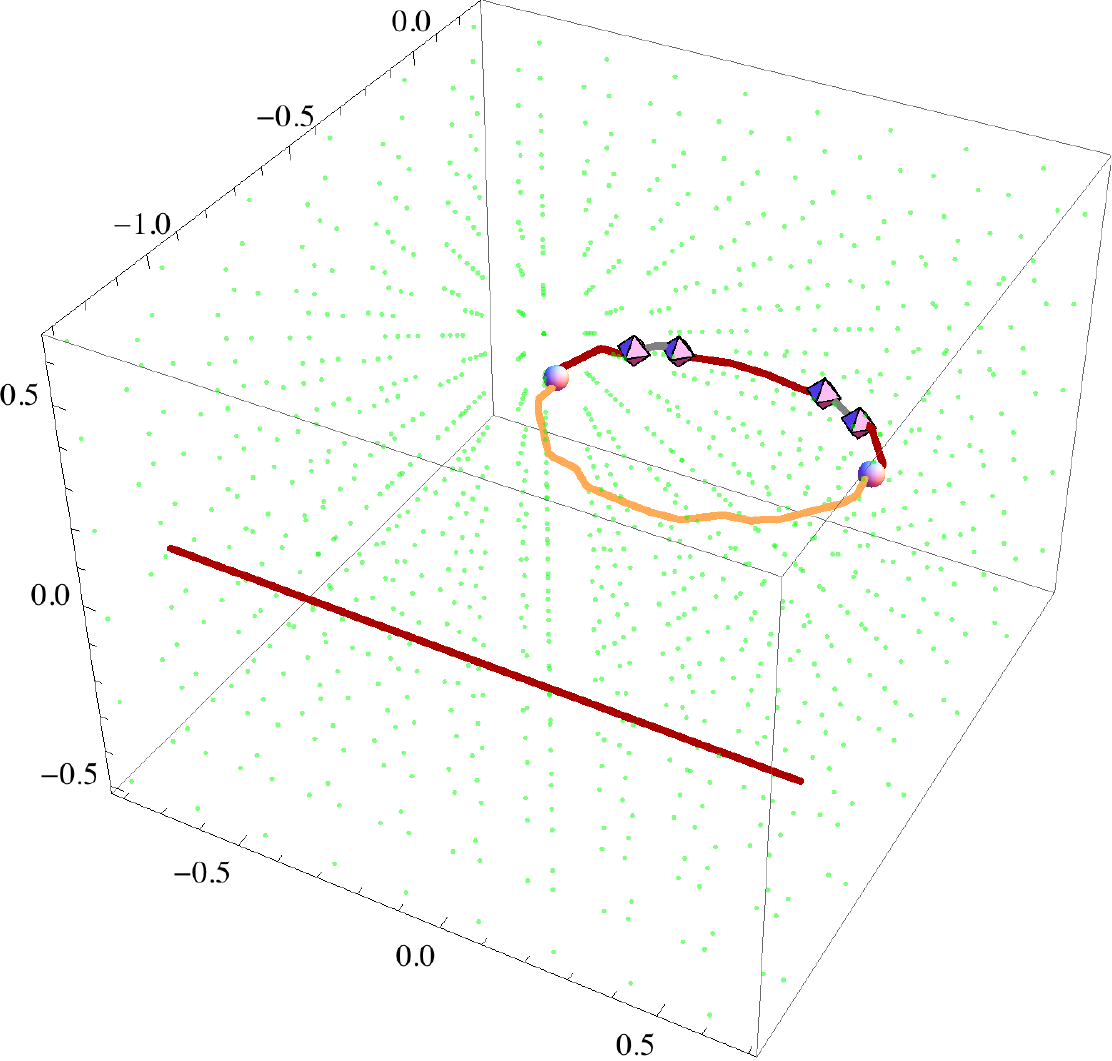} & \hspace*{-7mm}
\includegraphics[width=53mm]{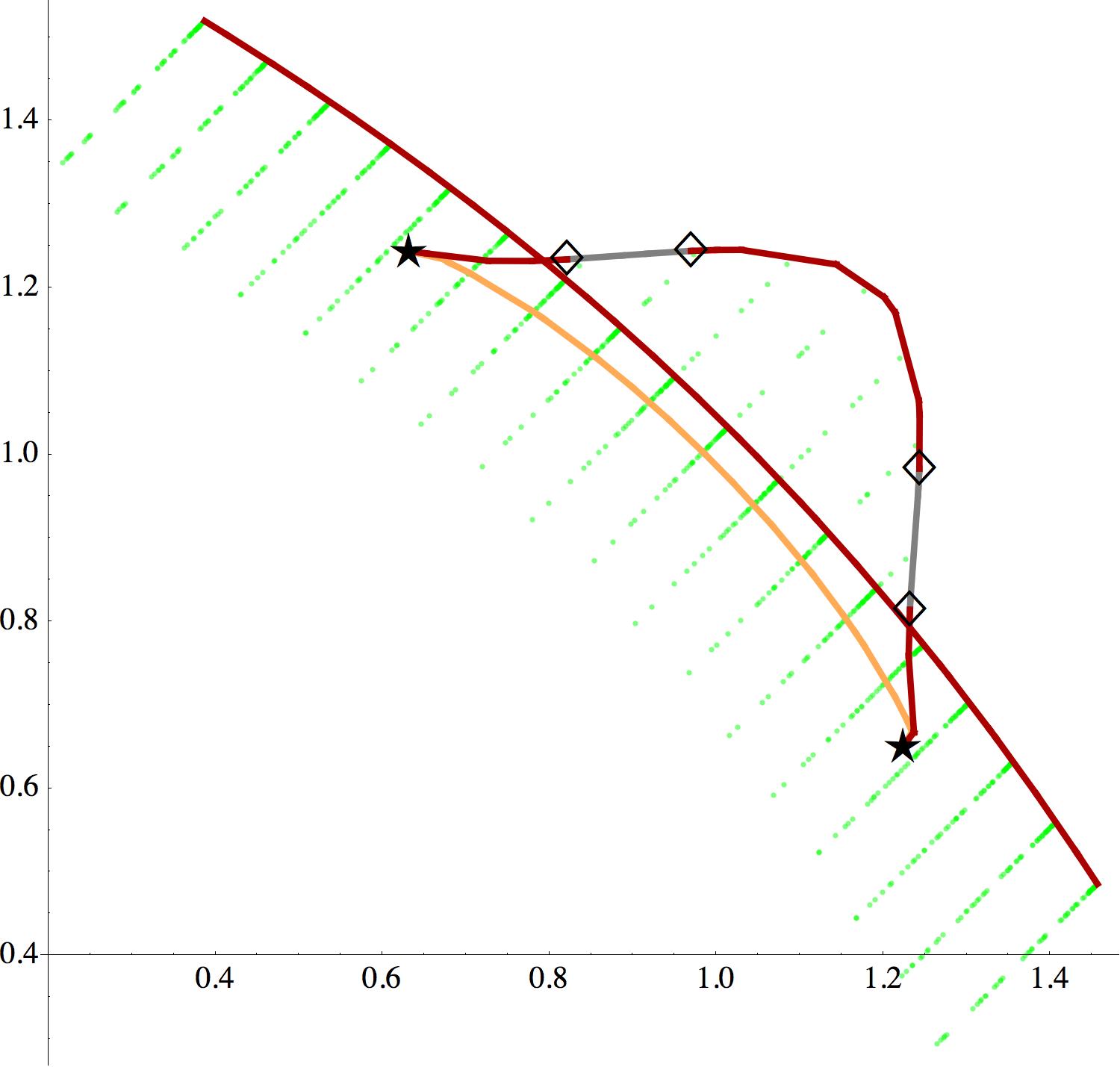} \\
(a) & (b)\\
\includegraphics[width=60mm]{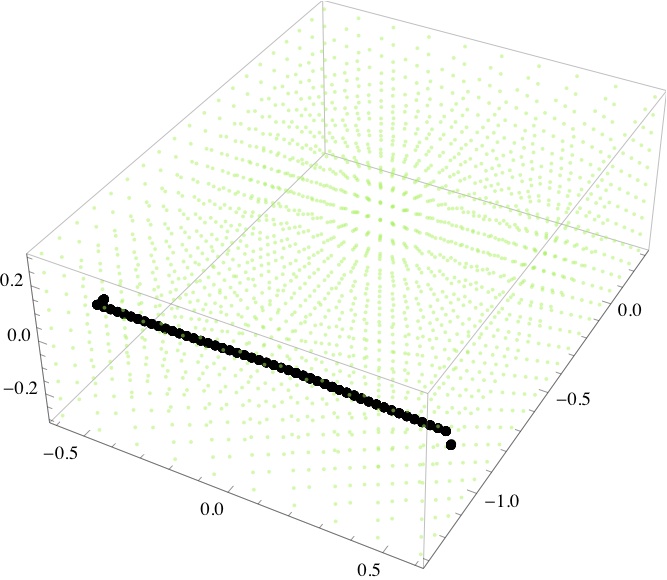}&
\includegraphics[width=45mm]{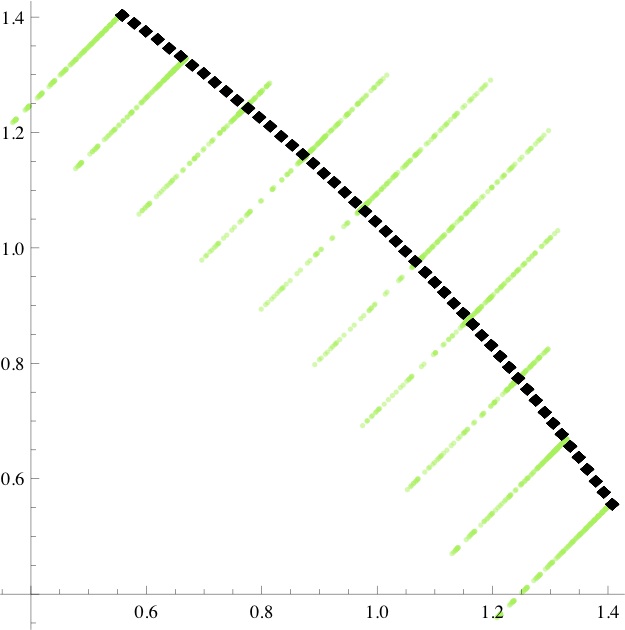} \\
(c) & (d)
\end{tabular}
\caption{Example \ref{ex:locglob}. Panel (a): singular (gray), Pareto critical (orange) and Pareto stable (red) sets in the problem domain. Green dots mark the nodes of the starting regular grid defining the tessellation. Octahedrons mark points separating critical and non critical branches. Spheres separate stable from unstable branches, i.e., mark cusps. Panel (b): image of singular and Pareto sets. Diamonds separates critical from non critical branches while stars mark the cusps. Panel (c)--(d): results obtained running the commercial implementation of NBI--AFSQP  available in \textup{\sf modeFRONTIER}$^\circledR$, courtesy of E. Rigoni. Small green points are a starting regular grid, while marked points are the solutions of the 50 NBI subproblems.
}
\label{fig:locglob}
\end{figure}
\end{esem}

\begin{esem} \label{ex:ncv6}
The following 6 dimensional example is a regularization of the third of the ZDT problems \cite{Deb:1999p2938}, which has degenerate second derivatives. The Pareto fronts of original and modified problems correspond each other in output space. We used a Delaunay tessellation defined on 300 randomly generated points. The results are presented in figure \ref{fig:ncv6}. Critical and merely singular branches are correctly represented. Note that the critical branches are correctly marked as unstable, being minima.
\begin{equation}
   \begin{split}
u1(x_1,\dots,x_6)& = x_1, \\
u2 (x_1,\dots,x_6)& = 1 - \sqrt{x_1} - x_1 \sin (10 \pi x_1) + x_2^2 +\dots+ x_6^2, \\
& x_1\in [0.1,0.425],\qquad x_2,\dots,x_6\in [-0.16,0.16].
   \end{split}
\end{equation}	
\begin{figure}
\centering
\includegraphics[width=60mm]{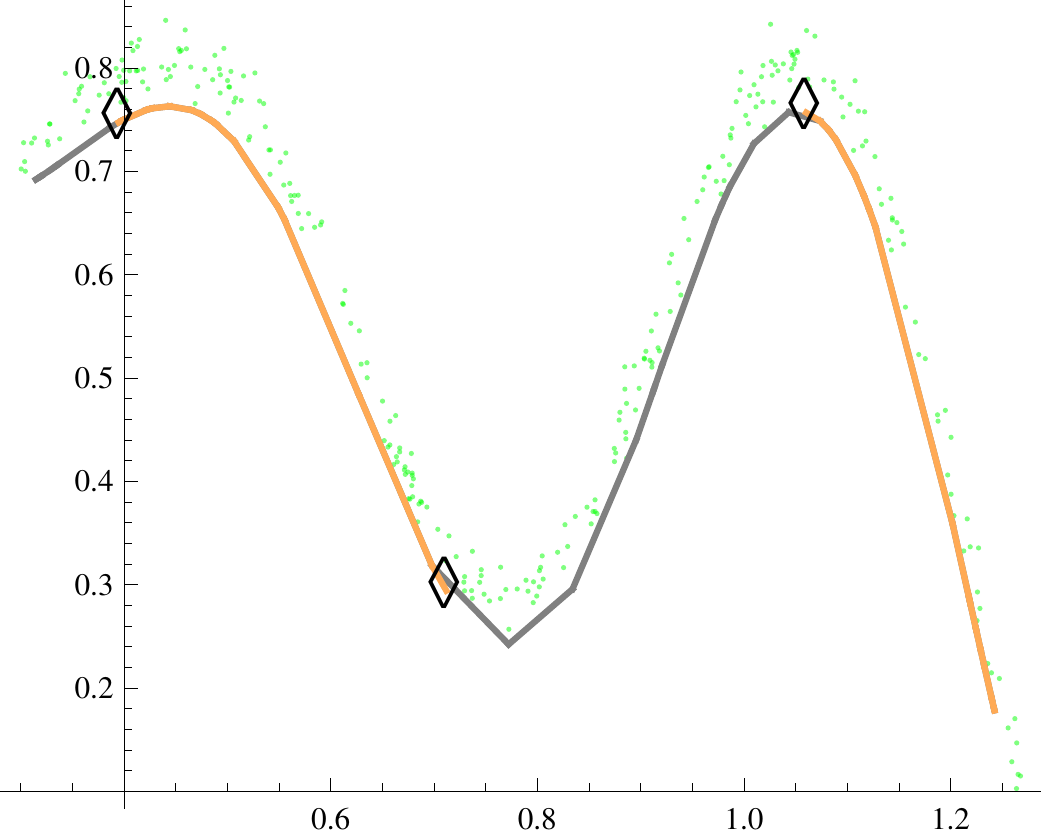} 
\caption{Example \ref{ex:ncv6}. Panel (a): image of the singular (gray), Pareto critical (orange) sets for regularized ZDT3. Green dots mark the nodes of the starting random distribution of points defining the tessellation. }
\label{fig:ncv6}
\end{figure}
\end{esem}

\subsection{Three functions examples}
\begin{esem} \label{ex:3by3}
The simplest nontrivial non degenerate example we can build in the three dimensional case is composed by three negative definite 2nd order polynomial functions $f_j(x), j=1,2,3$. Additionally, we introduce a small non polynomial perturbation.  
\begin{equation}
\begin{split}
 f_j (x) = \tonde{x-C_j}^\top \cdot
		\begin{pmatrix}
			- \alpha_{j,1} & 0 & 0 \\
			0 &- \alpha_{j,2} & 0 \\
			0 & 0 &- \alpha_{j,3}  
		\end{pmatrix}\cdot
		(x-C_j), \quad j=1,2,3,\\
	\trevec {u_1(x)} {u_2(x)} {u_3(x)} := \trevec{f_1(x)}{f_2(x)}{f_3(x)} + \trevec{0}{\beta_2 \sin\tonde{\frac{\pi}{\gamma_2}(x + y)}}{\beta_3 \cos\tonde{\frac{\pi}{\gamma_3}(x - y)}}.
\end{split}
\end{equation}	
Where $x=(x_1,x_2,x_3)^\top\in\R^3$, $\alpha_{j,i}>0$, $i,j=1,2,3$, $C_1, C_2, C_3\in\R^3$ are distinct, non collinear points, while $\beta_j, \gamma_j$ are real numbers. 
In the generic case the singular set is an hypersurface of $\R^3$, while the critical set $\theta$, which is stable, is diffeomorphic to a triangle, i.e., $\theta$ is a compact connected manifold with boundary and three corners, corresponding to the minima of the three functions $u_1,u_2,u_3$. 
See Figure \ref{fig:3by3}(a).
\end{esem}
\begin{esem}\label{ex:3by3ncv}
	We break the convexity of the previous example by adding a secondary maximum to the first function. We define a further negative definite, $2^{nd}$ order polynomial  $f_4(x)$  and set $u(x)$ as:
	\begin{equation}
	\trevec {u_1(x)} {u_2(x)} {u_3(x)} := \trevec{f_1(x)}{f_2(x)}{f_3(x)} + \trevec{\beta_1 \exp\tonde{\frac{1}{\gamma_1} f_4(x)}}{\beta_2 \sin\tonde{\frac{\pi}{\gamma_2}(x + y)}}{\beta_3 \cos\tonde{\frac{\pi}{\gamma_3}(x - y)}}.
	\end{equation}
The main portion of the Pareto set is slightly deformed while a new branch appears. 
In Figure \ref{fig:3by3}(b) is shown the resulting Pareto critical set $\theta$ obtained by iterative application of Algorithm \ref{alg:firstorder} as described in section \ref{sec:iterative}.

\end{esem}
\begin{figure}
\centering
\begin{tabular}{cc}
\includegraphics[width=60mm]{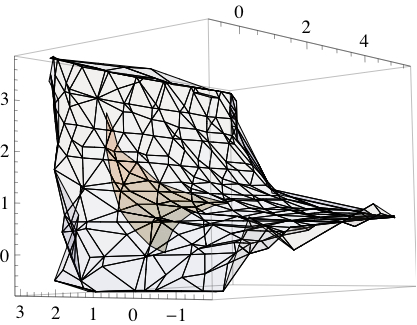}&\includegraphics[width=60mm]{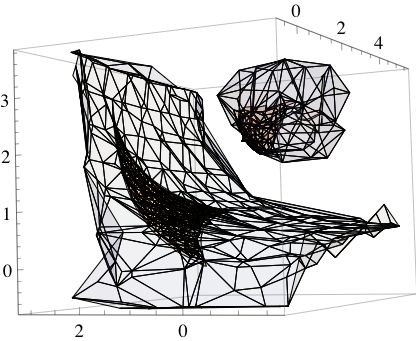} \\ 
(a) & (b)
\end{tabular}
\caption{(a): critical set $\theta$ of example \ref{ex:3by3}, (b): critical set $\theta$ of example \ref{ex:3by3ncv}}
\label{fig:3by3}
\end{figure}

%
%
\subsection{A constrained example} We briefly sketch here an adaptation of 
Algorithm \ref{alg:firstorder} to the case of equality constraints. Next we illustrate a simple application. 
Let $W:=\set{x\in\R^n\taleche  g(x) =0}$, where $g:\R^n\To\R^{n-d}$ is a smooth function such that $\partial g\eval{W}$ has maximum rank.
\begin{algorithm}[H]
 \caption {\it Equality constraint case for the 1st order algorithm}
 \label{alg:equalityconstraint}
\begin{algorithmic}[1]
 \STATE Determine a piecewise linear approximation $\widehat W$ of $W$, with nodes $P_1,\dots,P_n$ 
 \FORALL{ simplex $\Delta$ in the tessellation $\widehat W$,} 
 \STATE determine a piecewise linear approximation of the singular, critical and stable sets possibly crossing the simplex.  This is done on the basis of the projections of the gradients of the $u_j$s on the tangent space to $W$. In principle, a basis for $T W$ should be chosen respecting the orientation. 
 	\FORALL{ Node $P$ of the simplex $\Delta$} 
	\STATE compute $D g(P)$ and project $\grad u_j(P)$ on $\ker Dg(P)$ via $\pi_{Dg(P)}$. 
	\STATE compute an independent set of minors for the matrix $(\pi \grad u_1(P),\dots,\pi \grad u_m(P))$ or, equivalently,
	\STATE compute an independent set of minors for the matrix $(\grad g_1,\dots,\grad g_{n-d},\grad u_1, \dots,\grad u_m)$
	\ENDFOR
\STATE determine if all minors vanish inside the simplex $S$, and in that case locate $\widehat \Sigma$ via inverse linear interpolation
\STATE estimate $\lambda_j$ and determine the critical set $\widehat \theta$ as in Algorithm \ref{alg:firstorder}
\ENDFOR
\STATE eventually glue together adjacent portions of $\widehat \Sigma$ and $\widehat \theta$.
\end{algorithmic}
\end{algorithm}

\begin{esem}\label{ex:sphereproj}
Maybe the simplest example of constrained problem is when $W=\mathbb{S}^2$ and the objectives are the first two coordinates, $u_1(x_1,x_2,x_3):=x_1$, $u_2(x_1,x_2,x_3):=x_2$.\footnote{This example is also discussed in \cite{Melo:1976sj}.} 
Explicit algebraic computation give that the singular set $\Sigma$ is the equator of the sphere, where the two curvilinear segments where $x_1x_2>0$ are the critical set $\theta$, as illustrated in figure \ref{fig:sphereproj}(a).
By applying algorithm \ref{alg:equalityconstraint}, we start by approximating the sphere by an icosahedron. At every node $P= (x_1,x_2,x_3) $, $\frac{1}{2}\tonde{x_1^2+x_2^2+x_3^2 -1}=0$, we have $Dg(P)= (x_1,x_2,x_3) $, therefore the projections of the gradients of $u_j$ are $(1-x_1^2, -x_1x_2, -x_1x_3)$ and $(-x_1x_2, 1-x_2^2, -x_1x_3)$. The singular set $\Sigma$ passes through the triangles where the pair of vectors $\pi \grad u_1$ and $\pi \grad u_2$ change orientation in the tangent plane to $\mathbb{S}^2$. It is equivalent then to compute the determinant of the matrix which columns are $\grad g$, $\grad u_1$ and $\grad u_2$ and to determine the line along which it vanishes. This gives exactly the ``equator'' of the icosahedron. The signs of the $\lambda_j$ depend on the sign of the scalar product among $\pi\grad u_j$, again giving as turning points the intersections with the axes. The results are summarized in figures \ref{fig:sphereproj}(a) and (b).
\end{esem}
\begin{figure}
\centering
\begin{tabular}{cc}
\includegraphics[width=60mm]{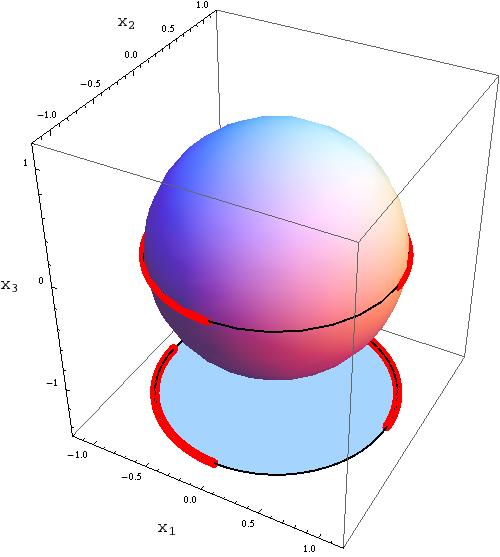}&\includegraphics[width=60mm]{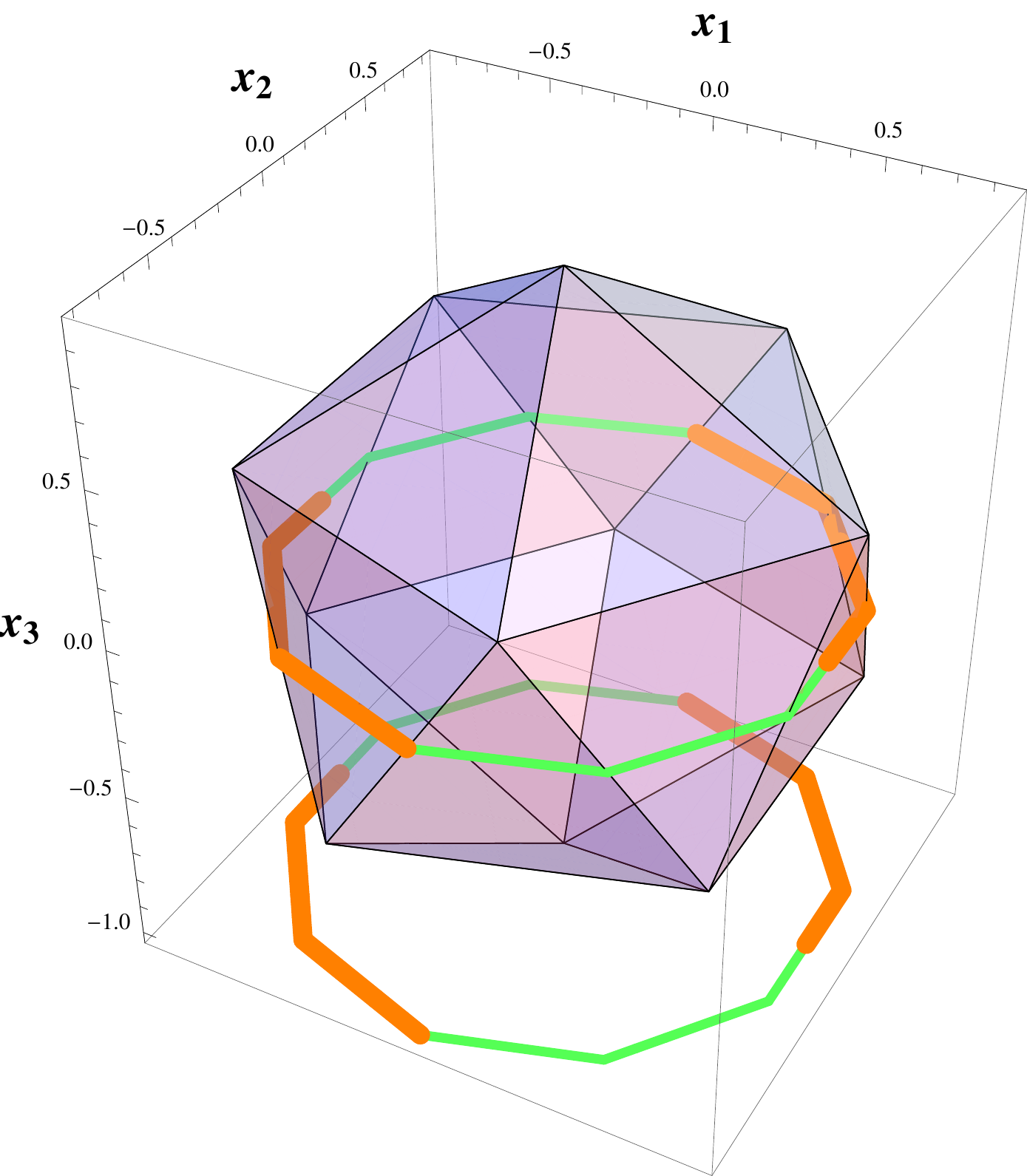} \\ 
(a) & (b)
\end{tabular}
\caption{(a): Singular and critical sets determined analytically for example \ref{ex:sphereproj} (b): piecewise linear approximation of the sphere, of the singular set (green solid curves) and of the critical set (orange and thicker curves)}
\label{fig:sphereproj}
\end{figure}

\section{Iterative schemes} 
\label{sec:iterative}
The previously presented approach defines an approximation of the Pareto optimal set given any distribution of points in the domain. Here we propose and discuss an iterative 
scheme. At every step a selection of points from the approximated Pareto optimal set is added to the dataset $\mathcal{D}$, the gradients in the new points are evaluated, the tessellation is updated and a refined approximation of the Pareto set is built.
The desired effect is obviously to get closer and closer to the actual optimal set, but an efficient strategy should produce an as uniform as possible discretization of the optimal set.

A na\"{\i}ve approach would suggest to insert in the set of the candidates for evaluation all of the nodes of the complexes, i.e., all the stable admissible vertices computed and all of the boundary points, both for criticality and stability. 
Nevertheless, a glance at the examples of the previous section reveals that the sizes of the optimal complexes cover a wide distribution, in particular the patches result very small if $\Sigma$ passes close to tessellation nodes.
Moreover, little experience shows that large patches are reduced sensibly slowly if none of their internal points is introduced. 
With this criteria in mind we introduce an iterative scheme for the case of two functions.

\subsection{Two functions iterative scheme}

In the two functions case the Pareto optimal set is a one dimensional manifold with boundary, i.e., a collection of curved intervals. The discrete approximation is a collection of polygonal curves. 
For every interval a sequence of candidate points equally spaced along the polygonal curve is extracted.
The number of points is chosen equal to the number of segments, so that approximately every triangle containing optimal points is split as close as possible to the optimal set.

\subsection{Higher number of functions}
It seems reasonable to take into account of the stratified structure of $\theta$ in the design of an iterative strategy. In fact, strata should be filled as uniformly as possible, where the uniformity is determined according to the $k$--dimensional measure, if $k$ is the dimension of the stratum. So, taking for instance the situation of example \ref{ex:3by3}, corners' approximations are re--evaluated at each iteration, uniformly spaced points are taken along boundary lines, exactly as in the two functions case, while internal points should be distributed proportionally to the area of the triangles and polygons composing $\widehat\theta$. This is more difficult to be defined precisely. 
Indeed, the problem of uniformly filling a general $n$--dimensional region is a long--time crucial issue for   statistical applications \cite{Santner:2003wm}. Furthermore, in our problem we have to fill uniformly a general $n$--dimensional \emph{manifold}, thus we have somehow to take into account of the effects of the curvature on the measure of the volumes. 

Taking inspiration from a \emph{Design of Experiments} strategy called \emph{maximin distance design} \cite{Johnson:1990kc} we proceed as described in Algorithm \ref{alg:uniform_filling}. 
\begin{algorithm}
\caption{Uniformly filling a simplicial complex}
\label{alg:uniform_filling}
\begin{algorithmic}[1]
\STATE Tessellate in simplexes the polytopes of the mesh
\STATE Build the adjacency lists of the simplexes
\STATE Evaluate the volume of each simplex
\STATE For every simplex define the accumulated volume as the sum of its volume and the volume of the adjacent simplexes
\STATE Pick the simplex with the maximum accumulated volume
\STATE Add to the candidates stack the center of mass of this maximal simplex 
\REPEAT
	\STATE Recompute the accumulated volumes excluding the already picked simplexes
\UNTIL the desired number of candidate points is collected
\end{algorithmic}
\end{algorithm}
This algorithm, because of point 6, can lead to long and thin simplexes and to numerical instabilities when iterated many times. This problem can be tackled by the application of mesh improvement strategies, as described below for the case of two dimensional domains. However, to the author's knowledge, general dimension mesh improvement strategies are still not available at present. 
\subsection{Stopping criteria} Analogously to gradient based methods of single function optimization (nonlinear conjugate gradient, Newton and Newton--like methods), a stopping criterion could be based on the magnitude of the minors $M_1, \dots,M_r$ computed in the points of the last iteration. The magnitude of the minors is analogous to the magnitude of the gradients for single objective optimization. 

In fact, we could define a different iterative strategy taking the rule of subdividing only stable critical triangles contained in simplexes where the minors are larger than a prescribed threshold. 

\subsection{Application}
We show the behavior of the iterative scheme described above applied to the mapping in Example \ref{ex:noncv}. At each iteration we generate a number of evenly spaced points along the approximate stable Pareto critical set. 
In order to exhibit the claimed quadratic convergence, it is necessary to sample the approximated optimal set by quadratically finer  intervals, i.e., comparable to the precision gained. As a result 
 the density of points will grow exponentially w.r.t. the number of iterations.  
Such a density of points rapidly deteriorates the mesh quality, i.e., skinny triangles suddenly appear leading to numerical instability.  Thus, at each iteration, a number of extra nodes (namely, the circumcenters of the most skinny triangles) should be introduced in the mesh in order to produce a nicely grading.  At this extent we have coupled our method with Ruppert's algorithm, as implemented in the triangular mesh refinement software \textsc{triangle} by J. R. Shewchuk
\cite{Shewchuk:1996xh,Shewchuk:2002wa}.


Already at the fifth iteration the triangulation starts to suffer from numerical instability, thus we consider $\widehat\theta_S^{(4)}$ generated at the fourth  iteration as the 
optimum and evaluate the Hausdorff distances between $\widehat\theta_S^{(i)}$ and $\widehat\theta_S^{(4)}$, for $i=1,\dots,3$. As in can be seen in Figure \ref{fig:haus}, panel (a), the Hausdorff distances between the approximated Pareto sets and the numerical optimum converges superlinearly. For reference also the convergence behavior of the maximum and the mean minors magnitude are reported. 

In Figure \ref{fig:progression2D} is illustrated how the triangulation and the representation of the Pareto set evolves from one iteration to the subsequent. 

In Figure \ref{fig:3by3prog}, the three dimensional problem of Example \ref{ex:3by3} is tackled by the procedure described in Algorithm \ref{alg:uniform_filling}. The algorithm has been applied by introducing only a small number ($\sim$10) of new points on the sites with the largest magnitude of the minors. In such a way it was possible to iterate 70 times the scheme 
reaching a very small magnitude for the minors. 

Because of the mentioned exponentially growing number of samples necessary to exhibit quadratic convergence speed for the iterative scheme, the experiment described for the two dimensional case becomes prohibitive in three dimensions. 

On the other hand, a superlinear precision can be verified as well by means of a sequence of approximations obtained from a  progressively finer regular meshes, corresponding to a sequence of mesh sizes $s=2.8,\dots,0.4$.  
Because the Hausdorff distance among the first $s$--approximation and the optimal set is already comparable to the largest mesh size of the optimal set, we analyze the sequence of average distances  between a point  of one set from the triangles of the other set, instead of considering the maximum distances. 
These average distances decrease faster than linearly as it can be seen by plotting the ratio of distances and mesh sizes versus the mesh sizes, 
as reported in Figure \ref{fig:haus}, panel (b).
\begin{figure}
\centering
\begin{tabular}{cc}
\raisebox{25mm}{\parbox{15mm}{\it \footnotesize Step 1.\\ 29 pts \\ (from 66)}} & \includegraphics[width=105mm]{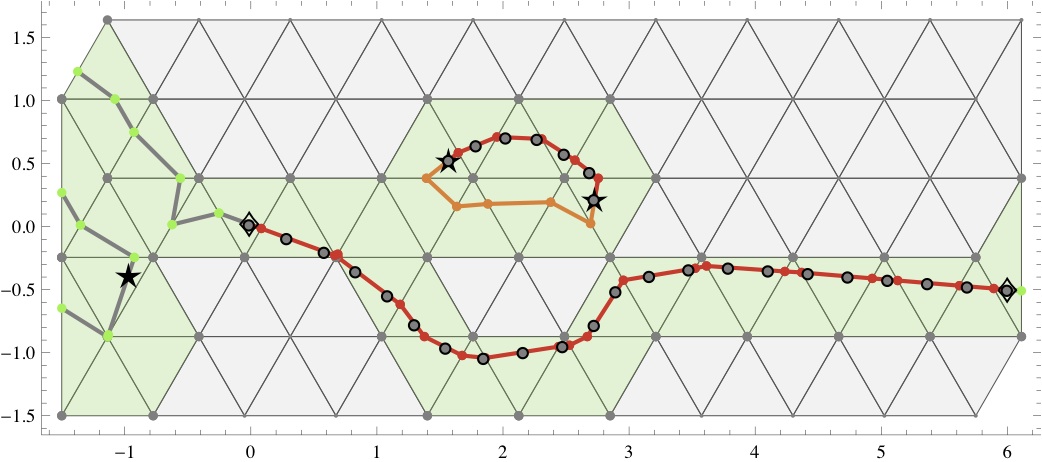} \\  
\raisebox{25mm}{\parbox{15mm}{\it \footnotesize Step 2.\\ 56 pts\\ (from 95)}} &  \includegraphics[width=105mm]{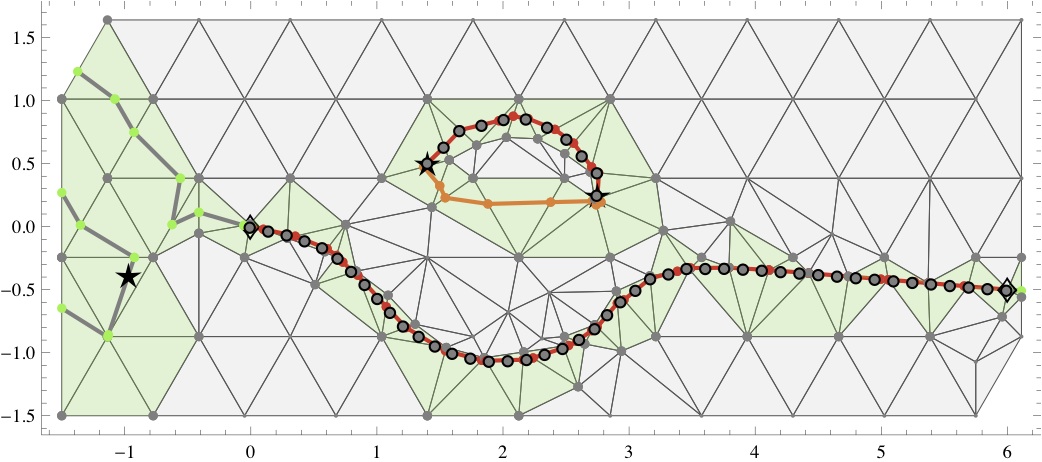} \\
\raisebox{25mm}{\parbox{15mm}{\it \footnotesize Step 3.\\ 302 pts\\ (from 168)}} & \includegraphics[width=105mm]{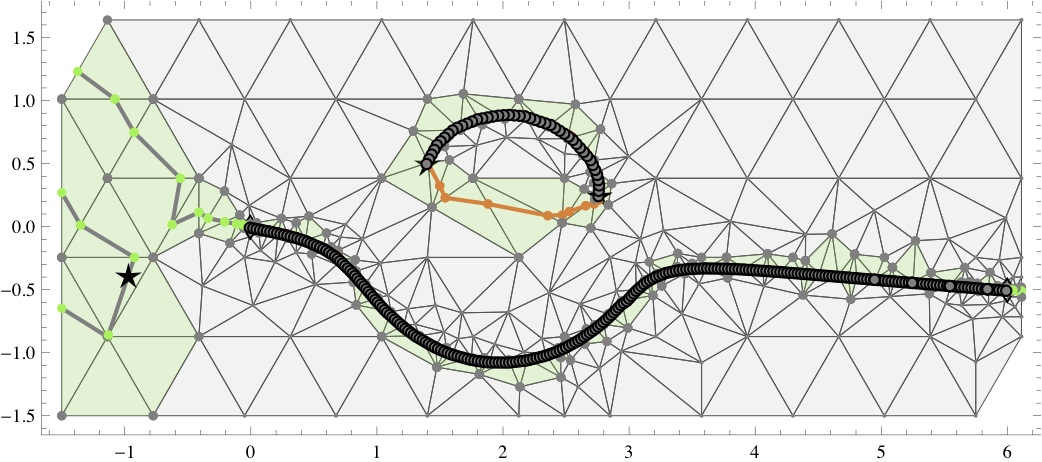} \\
\raisebox{25mm}{\parbox{15mm}{\it \footnotesize Step 4.\\ 2977 pts\\ (from 594)}} & \includegraphics[width=105mm]{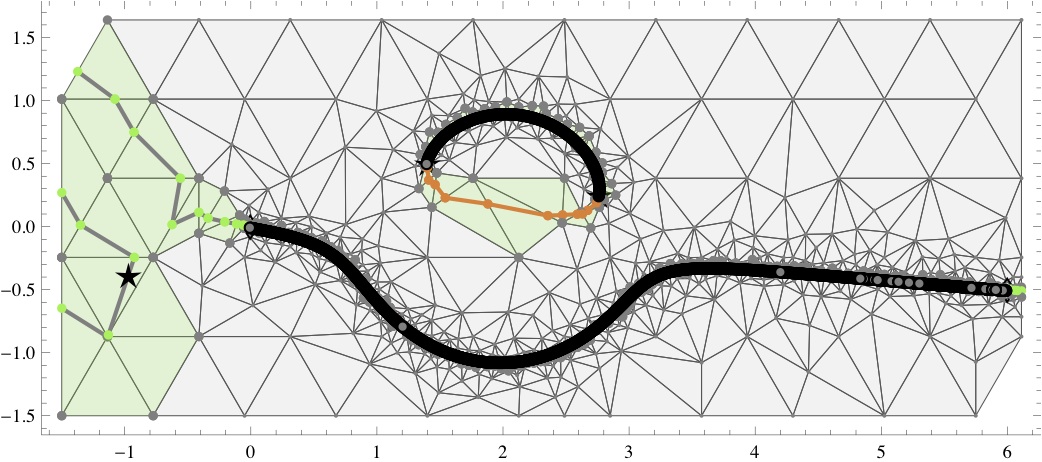}   
\end{tabular} 
\caption{Iterative scheme for the mapping in Example \ref{ex:noncv}.}
\label{fig:progression2D}
\end{figure}

%
\begin{figure}
\centering
\begin{tabular}{cc}
\includegraphics[width=50mm]{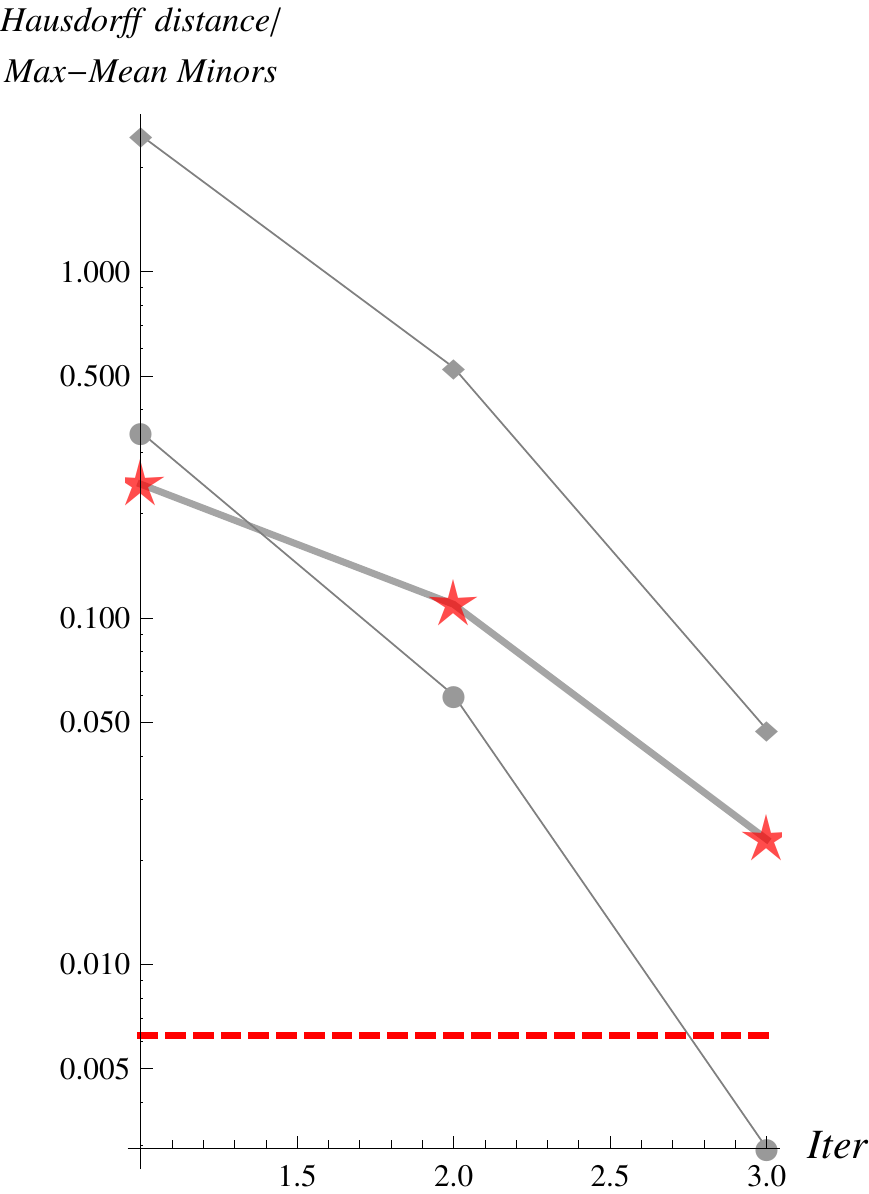} &
\parbox[b]{66mm}{\includegraphics[width=65mm]{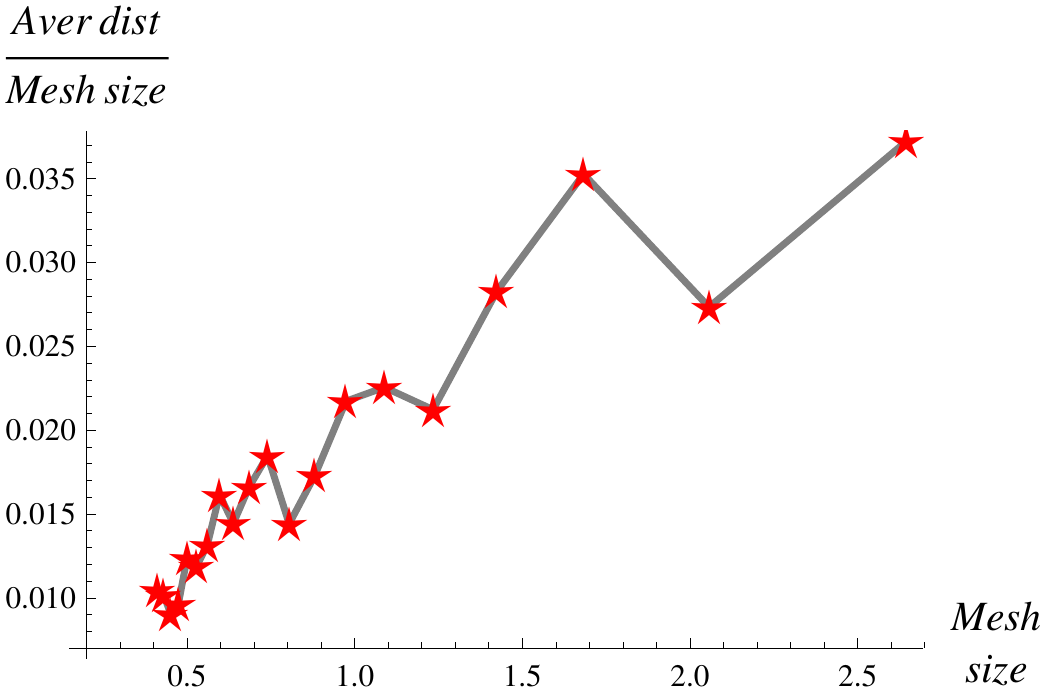}}\\
\it \footnotesize (a) Example  \ref{ex:noncv}. & \it \footnotesize (b) Example  \ref{ex:3by3}.
\end{tabular}
\caption{ Convergence behavior for iterative schemes applied to Examples  \ref{ex:noncv}  and \ref{ex:3by3}. 
Panel (a): Iterative scheme applied to Example \ref{ex:noncv}. Stars represent the Hausdorff distance between the approximated Pareto set at each iteration and the Pareto set obtained at the 4--th iteration, which is employed as an optimum. 
Diamonds and circles represent respectively the maximum and the mean absolute value of the minors of the Jacobian matrix computed on the points of the approximated Pareto set. Log scale reveals the superlinear convergence behavior.
Horizontal dashed line represents the mesh size of the numerical optimum. 
Panel (b): Algorithm \ref{alg:firstorder} applied to Example \ref{ex:3by3} using progressively finer regular meshes. 
Stars represent the average distance between a node of the optimal set and the triangles of the approximation and viceversa.
The ratio between the distance and the the mesh size decreases faster than linearly according to Theorem \ref{teo:quadraticsing}.
}
\label{fig:haus}
\end{figure}


\begin{figure}
\centering


\begin{tabular}{cc} 
\includegraphics[width=45mm]{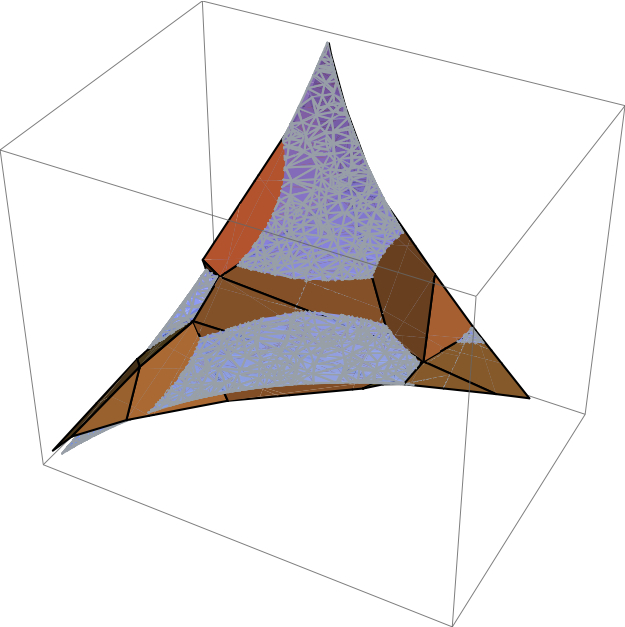} 
 & \includegraphics[width=45mm]{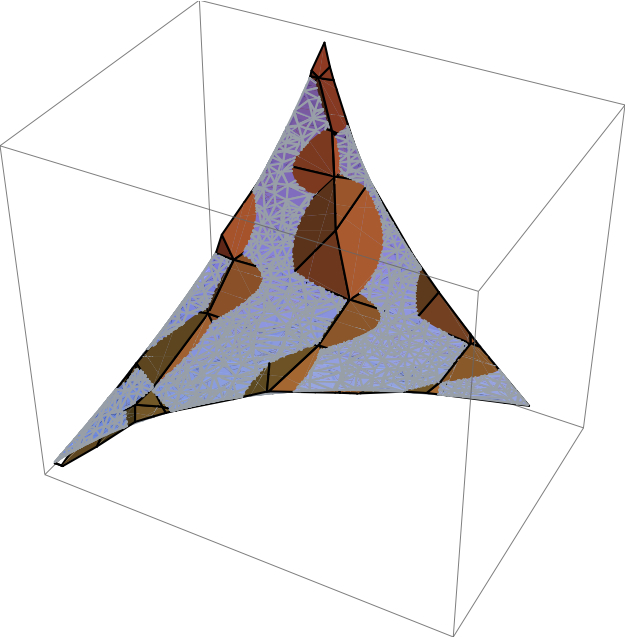}\\
\it\footnotesize Mesh size $= 2.3$ 
& \it \footnotesize Mesh size $= 1.5$
\end{tabular}\\
\begin{tabular}{c} 
\includegraphics[width=85mm]{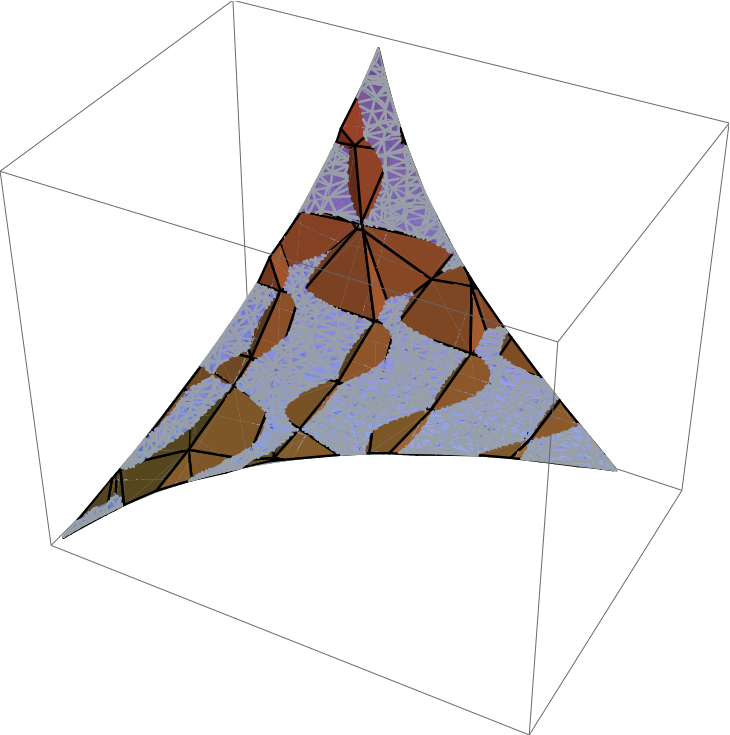}\\
\it \footnotesize Mesh size $=0.7$
\end{tabular}
\caption{Algorithm \ref{alg:firstorder} applied to Example \ref{ex:3by3} for progressively finer meshes. The blue surface is the numerical optimum obtained with long application of the iterative scheme.}
\label{fig:3by3prog}
\end{figure}

\section{Conclusions and perspectives}
We have presented a novel multiobjective optimization method which exploits
the manifold structure underlying the set of Pareto optimal points. Global analysis seems the proper setting 
where those structures arise and can be studied. 
We approximate Pareto sets via simplicial complexes, specializing simplicial pivoting techniques for detecting the singular manifold $\Sigma$ and successively cutting out critical and stable subsets $\theta$ and $\theta_S$. 
By contrast, most of the available strategies are aimed at producing a scatter of optimal points which images should be evenly distributed. We have illustrated some generic situation where this program could not be successfully completed via such point--wise strategies, because of nonconvexities of the functions.
Adopting the Hausdorff measure, Newton--type estimates lead to quadratic convergence in a set--wise sense.

Because of its global character, the method proposed here is demanding. Delaunay tessellations, in particular, are defined for every possible input dimension, but are numerically workable only for small dimensional cases.
The theory of singularities of mappings also highlights further limitations encountered when dealing with a large number of functions. Lastly, we have everywhere assumed the differentiability of the functions. Therefore the method is not suitable for non--smooth optimization, instead it is supposed to be applicable also via smooth surrogate functions, when approximations are consistent with the functions at hand. 
Possible extensions of the algorithms described in this paper are conditioned by the issues enumerated below. 

\subsection{The curse of dimensionality} The first problem one encounters when trying to apply this algorithms to industrial strength problems is the limitations to the input dimension. The whole procedure is 
based on a Delaunay tessellation of the input domain, which complexity grows exponentially with dimension. As pointed out for instance in the \texttt{qhull} documentation \cite{Barber:1996mk}, building the convex hull of a 9-hypercube is computationally exhaustive.
Analogous limitations are encountered in global optimization, where the search for optima in high dimensional domains cannot realistically be performed on real case problems. Indeed, typically, global search algorithms are rarely tested and compared over dimensions larger than 5 (see 
\cite{
Jones:1998et,Jones:2001sw,Khompatraporn:2005cr,
Pinter:2009fb,
Schonlau:1998jh,
Sergeyev:2006bu,
Wu:2005lp}.
This problem is structural and cannot be resolved 
by augmenting the computational resources.  
Therefore, the presented algorithms are best suited for low dimensional problems. 
In fact, the \emph{curse of dimensionality} is a strong motivation for reflecting  carefully on the necessity of introducing extra input variables 
when tackling new problems and designing experiments. 
A possible exit strategy  could be screening the input variables \cite{Schonlau:2005lp,
Sobol:2009ph
}.
This practice can be surprisingly successful, because usually \emph{sparsity of effects} occurs, revealing a pronounced  hierarchy among input variables, 
leading to sensible simplification of the problem formulation.
\footnote{The sparsity of effects is an empirical law stating that in a generic physical experiment one usually observes that the $80\%$ of the effects are due to the $20\%$ of the factors. Related phenomena are that the first order contributions are the most important, while higher order contributions decay fast. Finally one observes that the largest interactions (second order contributions) are combination of the strongest factors. See for instance 
\cite{Wu:2000vo}.}

Alternatively, as described in the recent paper \cite{Allgower:2002p2552}, it is possible to redefine any problem in $n$-dimensional space in an equivalent problem in a linear subspace of dimension $2(m-1) +1$, if $m$ is the number of objectives. This is because the singular set is an $(m-1)$-manifold, and by Whithney's embedding theorem, in the compact case, almost all projections on linear   $2(m-1) +1$ dimensional subspaces are diffeomorphisms. This would mean that bi--objective problems 
could be equivalently discussed in a 3 dimensional domain, 3 objectives would require only 5 input variables, and so on. 
This would dramatically reduce the computational burden of the tessellations involved.


\subsection{Surrogate models}
In industrial applications, when the objective functions at hand could be non differentiable, or may be computationally too expensive, preventing the computation of derivatives, we figure that the applicability of the algorithm proposed here will be significantly extended by using surrogate models. There exists an extensive literature developed in recent years on this subject (see
\cite{Jones:1998et,Jones:2001sw,Sacks:1989pz,Santner:2003wm}
 and the references therein), also with specific applications to multiobjective optimization 
 \cite{Knowles:2006fh,Knowles:2008px}

The procedures of this paper can be adapted applying the Algorithms \ref{alg:firstorder} and \ref{alg:secorder} to a surrogate model $\tilde u$ fitted to the values of the true functions $u$ computed on the given data points. On the outcoming candidate points, new evaluations of $u$ are to be computed, and a new surrogate model fitted to the increased dataset. 
This reduces the computational effort for computing derivatives and furthermore prevents premature stopping of the optimization process due to accidental failure of function evaluation at some data point. 
Again, the convergence to the Pareto sets of the true functions is guaranteed via global analysis.

\bibliographystyle{mysiam}
\bibliography{synthesis}
\end{document}